\renewcommand{\citepunct}{;\penalty\citemidpenalty\ }
\newtheorem{theorem}{Theorem}[section]
\newtheorem{proposition}[theorem]{Proposition}
\newtheorem{question}[theorem]{Question}
\newtheorem{corollary}[theorem]{Corollary}
\newtheorem{lemma}[theorem]{Lemma}
\newenvironment{customthm}[1]
  {\innercustomthm}
  {\endinnercustomthm}
\newenvironment{customcor}[1]
  {\innercustomcor}
  {\endinnercustomcor}
\theoremstyle{definition}
\newtheorem{definition}[theorem]{Definition}
\newtheorem{convention}[theorem]{Convention}
\newtheorem{remark}[theorem]{Remark}
\newtheorem{example}[theorem]{Example}
\DeclareMathOperator{\Frac}{Frac}
\DeclareMathOperator{\im}{im}
\newcommand{\QQ}{\mathbf{Q}}
\newcommand{\RR}{\mathbf{R}}
\newcommand{\ZZ}{\mathbf{Z}}
\newcommand{\fm}{\mathfrak{m}}
\newcommand{\id}{\mathrm{id}}
\DeclarePairedDelimiterX\abs[1]\lvert\rvert{\ifblank{#1}{\:\cdot\:}{#1}}
\DeclarePairedDelimiterX\norm[1]\lVert\rVert{\ifblank{#1}{\:\cdot\:}{#1}}
\providecommand\given{}
\newcommand\SetSymbol[1][]{\nonscript : \allowbreak \nonscript \mathopen{}}
\DeclarePairedDelimiterX\Set[1]\{\}{\renewcommand\given{\SetSymbol[\delimsize]} #1}
\newcommand{\hooklongrightarrow}{\lhook\joinrel\longrightarrow}
\renewcommand{\twoheadrightarrow}{\mathrel{\text{\two@rightarrow}}}
\newcommand{\longtwoheadrightarrow}{\mathrel{\text{\longtwo@rightarrow}}}
\newcommand{\two@rightarrow}{\sbox0{$\m@th\rightarrow$}\smash{\rlap{\kern0.1\wd0 \clipbox{{.3\width} {-\height} 0pt {-\height}}{$\m@th\rightarrow$}}}$\m@th\rightarrow$}
\newcommand{\longtwo@rightarrow}{\sbox0{$\m@th\longrightarrow$}\smash{\rlap{\kern0.175\wd0 \clipbox{{.3\width} {-\height} 0pt {-\height}}{$\m@th\longrightarrow$}}}$\m@th\longrightarrow$}
\begin{document}
\title[Tate algebras and Frobenius non-splitting of excellent regular rings]{Tate algebras and \\  Frobenius non-splitting of excellent regular rings}
\subjclass[2010]{Primary 13A35; Secondary 14G22, 46S10, 12J25, 13F40}
\keywords{Tate algebra, Frobenius splitting, $F$-purity, $p^{-1}$-linear map,
excellent ring, convergent power series}

\author{Rankeya Datta}
\thanks{The first author was supported by an AMS-Simons travel grant.}
\address{Department of Mathematics\\Michigan State University\\East Lansing, IL 48824\\USA}
\email{\href{mailto:dattaran@msu.edu}{dattaran@msu.edu}}
\urladdr{\url{https://www.rankeyadatta.com}}

\author{Takumi Murayama}
\address{Department of Mathematics\\Princeton University\\Princeton, NJ
08544-1000\\USA}
\email{\href{mailto:takumim@math.princeton.edu}{takumim@math.princeton.edu}}
\urladdr{\url{https://web.math.princeton.edu/~takumim/}}
\thanks{The second author was supported by the National Science
Foundation under Grant No.\ DMS-1902616}

\makeatletter
  \hypersetup{
    pdfauthor={Rankeya Datta and Takumi Murayama},
    pdfsubject=\@subjclass,
    pdfkeywords={Tate algebra, Frobenius splitting, F-purity, p\textasciicircum-1-linear map, excellent ring, convergent power series}
  }
\makeatother

\begin{abstract}
An excellent ring of prime characteristic for which the
Frobenius map is pure is also Frobenius split in many commonly 
occurring situations in positive characteristic commutative
algebra and algebraic geometry.
However, using a fundamental construction from rigid geometry, we 
show that excellent $F$-pure rings of prime characteristic are not Frobenius split in general,
even for Euclidean domains.
Our construction uses the existence of a complete non-Archimedean field
$k$ of characteristic $p$ with no nonzero continuous $k$-linear maps $k^{1/p}
\to k$. An explicit example of such a field is given based on ideas of Gabber,
and may be of independent interest.
Our examples settle
a long-standing open question in the theory of $F$-singularities whose origin can be 
traced 
back to when Hochster and Roberts introduced the notion of $F$-purity. The excellent 
Euclidean domains we construct also admit
no nonzero $R$-linear maps $R^{1/p} \rightarrow R$. 
These are the first examples that illustrate that $F$-purity
and Frobenius splitting 
define different classes of singularities for excellent domains, and
are also the first examples of 
excellent domains 
with no nonzero $p^{-1}$-linear maps. The latter is particularly
interesting from the perspective of the theory of test ideals.
\end{abstract}

\maketitle

\section{Introduction}\label{sect:intro}
Let $R$ be a Noetherian ring of prime characteristic $p > 0$.
A key theme in characteristic $p$ commutative algebra and algebraic
geometry is to use the \textsl{Frobenius map}
\[
  \begin{tikzcd}[column sep=1.475em,row sep=0]
    \mathllap{F_R\colon} R \rar & F_{R*}R\\
    r \rar[mapsto] & r^p
  \end{tikzcd}
\]
to study the singularities of $R$ (see the surveys \cite{SZ15,PST17,TW18}).
This line of study was initiated by Kunz, who proved that $R$ as above is
regular if and only if $F_R$ is faithfully flat \cite[Thm.\ 2.1]{Kun69}.
Following Kunz's results and building on their work showing that 
rings of invariants of reductive groups are
Cohen--Macaulay \cite{HR74},
Hochster and Roberts defined $R$ to be \textsl{$F$-pure} if $F_R$ is
pure in the following sense: for every $R$-module $M$, the base
change $M \rightarrow M \otimes_R F_{R*}R$
is injective \cite[p.\ 121]{HR76}.
Since faithfully flat maps are pure \cite[Thm.\ 7.5$(i)$]{Mat89},
it follows that regular rings of characteristic $p$ are $F$-pure. In particular,
$F$-pure rings form a natural class of singular characteristic $p$ 
Noetherian rings.

\par In their study of the cohomology of Schubert varieties, Mehta and
Ramanathan defined $R$ to be \textsl{Frobenius split} if $F_R$ splits
as a map of $R$-modules, that is, if $F_R$ admits
an $R$-linear left inverse \cite[Def.\ 2]{MR85}.
Since split maps are automatically pure, Frobenius split rings are
$F$-pure.
Whether the converse holds in nice situations was a mystery,
prompting the following folklore question (see for instance \cite[Rem.\ 1.12]{SZ15}):

\begin{question}\label{ques:fpureisfsplit}
  Let $R$ be an \emph{excellent} Noetherian ring of prime characteristic.
  If $R$ is $F$-pure, is it necessarily Frobenius split?
\end{question}

\noindent For the definition of an excellent ring, we refer 
the reader to \cite[D\'ef.\ 7.8.2]{EGAIV2}. 

\medskip
\par Our main result shows that even excellent \emph{regular} rings are not
Frobenius split in general.
\begin{customthm}{\ref{thm:mainthm}}
For every prime $p > 0$,
there exists a complete non-Archimedean field $(k,\abs{})$ of
characteristic $p$ such that the Tate algebra 
$T_n(k) \coloneqq k\{ X_1,X_2,\ldots,X_n \}$
is not Frobenius split for each $n > 0$.
In fact, $T_n(k)$ admits no nonzero $T_n(k)$-linear maps
$F_{T_n(k)*}T_n(k) \to T_n(k)$ for each $n > 0$.
\end{customthm} 
The origin of Question \ref{ques:fpureisfsplit} can be traced to the 
aforementioned work of 
Hochster and Roberts on the purity of Frobenius, where the equivalence of
$F$-purity and Frobenius splitting is observed when 
$F_R$
is a finite map \cite[Cor.\ 5.3]{HR76}. Finiteness of Frobenius
implies $R$ is excellent by \cite[Thm.\ 2.5]{Kun76}. Question
\ref{ques:fpureisfsplit} also has an affirmative answer when $R$ is
essentially of finite type over a complete local ring by \cite[Thm.\ 3.1.1]{DM}.
Thus, the notions of $F$-purity and Frobenius splitting coincide for all rings
appearing in the study of algebraic varieties over positive characteristic fields.

\par The Tate algebra $T_n(k)$, introduced by Tate
\cite{Tat71}, is the analogue of the polynomial ring 
$k[X_1,\ldots, X_n]$ in the world of rigid analytic geometry and shares many 
of its good properties (see \cite{Bos14} and Theorem \ref{thm:tateprops}). For example,
$T_n(k)$ is an $n$-dimensional Jacobson unique factorization domain 
that is regular (hence $F$-pure), and is excellent by a theorem of Kiehl
\cite[Thm.\ 3.3]{Kie69}.
Theorem \ref{thm:mainthm} says that despite the similarities between $T_n(k)$
and $k[X_1,\dots,X_n]$, rigid analytic 
geometry and classical algebraic geometry are fundamentally different 
from the perspective of Frobenius splittings.
Theorem \ref{thm:mainthm} should also be contrasted with 
\cite[Thm.\ 3.2]{DS18}, where Smith and the first author show that if $R$ is a
Noetherian domain of characteristic $p$ whose fraction field $K$ satisfies 
$[K^{1/p}:K] < \infty$, then $R$
is excellent precisely when $R$ admits a nonzero $R$-linear
map $F_{R*}R \rightarrow R$.

\medskip
\par We establish Theorem \ref{thm:mainthm}
by first proving the following
necessary and sufficient criterion for Frobenius splitting of the Tate algebra $T_n(k)$ in terms
of a topological property of the non-Archimedean field $k$.

\begin{customthm}{\hyperref[thm:refined-mainthm]{B}}
  \label{thm:refined-mainthmintro}
  Let $(k,\abs{})$ be a complete non-Archimedean field of
  characteristic $p > 0$.
  The following are equivalent:
  \begin{itemize}
    \item
      $T_n(k)$ is Frobenius split for each integer $n > 0$.
    \item 
      $T_n(k)$ 
      has a nonzero $T_n(k)$-linear map
      $F_{T_n(k)*} T_n(k) \rightarrow T_n(k)$ for each integer $n > 0$.
    \item
      There exists a nonzero continuous $k$-linear map $f\colon
      k^{1/p} \to k$.
  \end{itemize}
\end{customthm}

\noindent The equivalent statements above are a subset of those we prove in Theorem
\ref{thm:refined-mainthm}.
Here, we view $k^{1/p}$ as a normed $k$-vector space with the canonical 
norm that extends the norm on $k$, and then equip $k^{1/p}$ and $k$ with
the corresponding metric topologies to be able to talk about continuous maps.

\par Using this topological characterization,
Theorem \ref{thm:mainthm} follows by explicitly constructing 
a non-Archimedean field $(k,\abs{})$ for which
$k^{1/p}$ admits no nonzero continuous linear functionals (see
Theorem \ref{thm:main-example}). The existence of such fields
is suggested by Gerritzen \cite{Ger67} and Kiehl \cite{Kie69} (see also
\cite[p.\ 63]{BGR84}),
who take significant care to show that the Tate algebra is Japanese and
excellent
when $[k^{1/p}:k]$ is infinite. However, we were unable to locate an explicit example
in the literature, and the example in Theorem \ref{thm:main-example} is due to Ofer Gabber.
Similar constructions of valuative fields with infinite $p$-degree
 have been studied by Blaszczok and Kuhlmann \cite{BK15},
although the connection with the
existence of continuous functionals was not made (see Remark \ref{rem:BKfields}).

\medskip
\par We would like to isolate one consequence of Theorem \ref{thm:refined-mainthm}
to emphasize the simplicity of the examples obtained in this paper.

\begin{customcor}{\ref{cor:no-p-1-linear-maps}}
There exists an excellent Euclidean domain $R$ of characteristic
$p > 0$ such that $R$ admits no nonzero $R$-linear maps $F_{R*}R \rightarrow R$.
Moreover, one can choose $R$ to be local and Henselian as well.
\end{customcor}

\noindent Corollary \ref{cor:no-p-1-linear-maps} follows from Theorem \ref{thm:refined-mainthm}
and the aforementioned example of a non-Archimedean field $k$ with no nonzero
continuous linear functionals $k^{1/p} \rightarrow k$ upon taking $R$ to be the
Tate algebra $T_1(k)$. For this, one uses the well-known fact that  
like $k[X]$, the Tate algebra $T_1(k)$ is a Euclidean
domain (see Theorem \ref{thm:tateprops}$(\ref{thm:T_1euclid})$). In addition, we 
obtain a local and Henselian example by proving an analogue of
Theorem \ref{thm:refined-mainthm} for \textsl{convergent power series rings}
(see Definition \ref{def:convergent-series}) in Theorem \ref{prop:local-analogue}.
Corollary \ref{cor:no-p-1-linear-maps} should be contrasted with the well-known
fact that a Noetherian complete local $F$-pure ring is always Frobenius split
(see Remark \ref{rem:as-close-to-complete}). 
Thus, our local Henselian example shows that Question \ref{ques:fpureisfsplit} fails even
for the class of excellent Noetherian local rings that are closest in behavior to complete
local rings.

\par Maps of the form
\begin{equation}\label{eq:p-1-linearmap}
  F_{R*}R \longrightarrow R
\end{equation}
are used extensively in prime characteristic
commutative algebra and algebraic geometry.
One of the first such examples comes from looking at the stalks of the
\textsl{Cartier operator,} which
is a map of the form $F_{X*}\omega_X \to \omega_X$ on a smooth variety $X$ over a
perfect field of prime characteristic \cite{Car57}.
Blickle and B\"ockle later called maps of the form \eqref{eq:p-1-linearmap}
\textsl{$p^{-1}$-linear maps} \cite[p.\ 86]{BB11}.
The reason for this terminology is that for
$r \in R$ and $x \in F_{R*}R$, a $p^{-1}$-linear map $\phi$ satisfies 
\[\phi(r^{p}x) = \phi(r \cdot x) = r\phi(x),\]
where the first equality follows from the $R$-module
structure on $F_{R*}R$ induced by restriction of scalars via the Frobenius
map $F_R$.

\par The notion of a $p^{-1}$-linear map was used by Hara and Takagi \cite{HT04},
Schwede \cite{Sch10}, and Blickle \cite{Bli13}, among others, to give an alternate
approach to the theory of \textsl{test ideals}.
Test ideals were originally defined by
Hochster and Huneke in their celebrated theory of tight closure \cite{HH90},
and later extended to the context of pairs by Hara and Yoshida \cite{HY03} and
Takagi \cite{Tak04}.
Quite surprisingly, test ideals were shown to be related to the characteristic 
zero notion of multiplier ideals
by work of Smith \cite{Smi00}, Hara \cite{Har01}, Hara and Yoshida \cite{HY03},
and Takagi \cite{Tak04}.
Since then, algebraists and geometers
have used test ideals to establish many prime characteristic analogues of
characteristic $0$ results whose proofs require deep vanishing theorems that
are known to fail in positive characteristic \cite{Ray78,HK15}.
See the surveys \cite{ST12,BS13,SZ15,PST17,TW18} for various applications.
More recently,
Schwede's insight \cite{Sch10} to use compatibly split subschemes 
defined via $p^{-1}$-linear maps as a foundation for the theory of test ideals,
building on the work of Mehta and Ramanathan \cite{MR85} and
Lyubeznik and Smith \cite{LS01},
has 
further advanced our understanding of positive characteristic rings and varieties.
In this context, Corollary \ref{cor:no-p-1-linear-maps} serves to 
caution us that although excellent
rings have nonzero $p^{-1}$-linear maps in many geometric situations, they may fail
to have such maps in general. In particular,
the approach to test ideals using $p^{-1}$-linear maps
will require a different formulation if one hopes to generalize the theory to
arbitrary excellent rings and schemes over $\mathbf{F}_p$. Even an extension
of the theory encompassing Tate algebras and their quotients
may need a new approach.

\medskip
\par While our emphasis so far has been on non-existence results, Theorem \ref{thm:refined-mainthm}
has positive consequences as well. For instance, we can show that $T_n(k)$ is Frobenius split
for many commonly occurring non-Archimedean fields of positive characteristic.

\begin{customcor}{\ref{cor:F-split-usually}}
  Let $(k,\abs{})$ be a complete non-Archimedean field of
  characteristic $p > 0$.
For each $n > 0$, the Tate algebra $T_n(k)$ is Frobenius split in the
following cases:
\begin{enumerate}[label=$(\roman*)$,ref=\roman*]
	\item $(k,\abs{})$ is spherically complete.
	\item $k^{1/p}$ has a dense $k$-subspace $V$ that has a countable $k$-basis, hence in particular
	if $[k^{1/p}:k] < \infty$.
	\item $\abs{k^\times}$ is not discrete, and the norm on $k^{1/p}$ is polar.
\end{enumerate}
\end{customcor}
\noindent 
A spherically complete non-Archimedean field 
(see Definition \ref{def:spherically-complete})
is a generalization of a field equipped with a discrete valuation whose 
corresponding valuation ring is complete.
The notion of polarity in $(\ref{cor:F-split-polar})$ is a more technical condition
due to Schikhof generalizing $(\ref{cor:F-split-usually-spher})$ and
$(\ref{cor:F-split-usually-count})$, which appears in Theorem
\ref{thm:Hahn-Banach}$(\ref{thm:Hahn-Banachpolar})$.
The proof of Corollary \ref{cor:F-split-usually} requires some knowledge of 
non-Archimedean functional analysis that we will summarize in Subsection
\ref{subsection:NA-functional-analysis}.
The essential fact is that when $k$ is a non-Archimedean
field of the above two types, then variants of the Hahn--Banach
theorem for normed spaces over $\mathbf{R}$ or $\mathbf{C}$ also hold for
normed spaces over $k$. In particular, this gives a wealth of nonzero
continuous linear functionals of $k^{1/p}$, thereby allowing us to use
Theorem \ref{thm:refined-mainthm}.

\subsection*{Organization of the paper} In Section \ref{sec:NA-review}, we
review all definitions and results from non-Archimedean analysis that
are used in the rest of the paper. Our aim in writing this section has been
to motivate some of the constructions of non-Archimedean geometry by relating
them to more familiar constructions for polynomial rings.
In Section \ref{sec:refined-mainthm}, we first prove Theorem \ref{thm:refined-mainthm}
and then use it to deduce Frobenius splitting of Tate algebras in some cases
in Corollary \ref{cor:F-split-usually}. In Section \ref{sec:local-construction}, 
we adapt the proof of
Theorem \ref{thm:refined-mainthm} to the local setting of convergent power series
rings (see Theorem \ref{prop:local-analogue}), 
that later gives us a wealth of local examples for which Question \ref{ques:fpureisfsplit}
fails.  Finally, Section 
\ref{sec:troublesome-field} contains Gabber's example of a non-Archimedean field
$k$ such that $k^{1/p}$ admits no nonzero continuous linear functionals. Theorem \ref{thm:mainthm}
and Corollary \ref{cor:no-p-1-linear-maps} are then proved as straightforward
consequences of Theorem \ref{thm:refined-mainthm} by working over the
aforementioned example.
 Their analogues for the local ring of convergent power series are observed as well.

\subsection*{Notation}
All rings will be commutative with identity.
If $R$ is a ring of prime characteristic $p > 0$, then the \textsl{Frobenius
map} on $R$ is the ring map 
\[
  \begin{tikzcd}[column sep=1.475em,row sep=0]
    \mathllap{F_R\colon} R \rar & F_{R*}R.\\
    r \rar[mapsto] & r^p
  \end{tikzcd}
\]
The notation $F_{R*}R$ is used to emphasize the fact that the target of the Frobenius
map has the (left) $R$-algebra structure given by $r \cdot x = r^px$.
A \textsl{$p^{-1}$-linear map on $R$} is the datum of an $R$-linear map $F_{R*}R
\rightarrow R$.
Thus, a Frobenius splitting of $R$ is a $p^{-1}$-linear map on $R$ that sends
$1$ to $1$.
\par If $R$ is a domain, we will sometimes identify $F_{R*}R$ with the ring
$R^{1/p}$ of $p$-th roots of elements in $R$.
Under this identification, the Frobenius map $F_R\colon R \to F_{R*}R$
corresponds to the inclusion $R \hookrightarrow R^{1/p}$.

\subsection*{Acknowledgments}
We are first and foremost extremely grateful to Ofer Gabber for allowing
us to reproduce his example of a non-Archimedean field $k$ such that $k^{1/p}$ admits no nonzero
continuous $k$-linear functionals. 
We are also grateful to him for his insightful comments on drafts of this paper.
We would next like to thank Franz-Viktor Kuhlmann for bringing our attention to
the examples of valuative fields with infinite $p$-degree in \cite{BK15}.
The first author learned about Question \ref{ques:fpureisfsplit} from
 Karl Schwede at the 2015 Mathematics Research Communities 
in commutative algebra and is grateful to Karl for numerous discussions
on this problem back then.
The first author would also like to thank Karen E. Smith for fruitful
discussions about the existence of nonzero $p^{-1}$-linear maps that began
while writing \cite{DS18}.
Eric Canton and Matthew Stevenson thought about some of these questions with
us and we thank them for their insights as well.
We are grateful to Benjamin Antieau, Oren Ben-Bassat, Bhargav Bhatt,
Brian Conrad, Remy van Dobben de Bruyn, Mattias Jonsson, Kiran S. Kedlaya,
and Salma Kuhlmann for helpful
conversations, and to Melvin Hochster, Linquan Ma, Mircea Musta\c{t}\u{a},
Karl Schwede, Karen E. Smith,
and Kevin Tucker for their comments on previous drafts of this paper
and for illuminating conversations pertaining to the origin of
Question \ref{ques:fpureisfsplit}. Finally, we thank the referees for their
helpful comments.

\section{A review of non-Archimedean analysis}\label{sec:NA-review}
We will use basic results from non-Archimedean functional analysis to
produce our counterexamples and to prove that the Tate algebra is
Frobenius split in some commonly occurring situations. Thus, we collect all relevant 
definitions and results we will need 
in the present section
to make the paper easier to navigate for readers unfamiliar with the
non-Archimedean world.
All results appearing in this section are well-known.

\subsection{Non-Archimedean fields, normed spaces, and Tate algebras}
\begin{definition}
  A \textsl{real-valued field} $(k,\abs{})$ is a pair consisting of a
  field $k$ and a non-Archimedean valuation $\abs{}\colon k \to \RR_{\ge 0}$
  (written multiplicatively),
  which is a function that satisfies:
  \begin{itemize}
    \item $\abs{x} = 0$ if and only if $x = 0$;
    \item $\abs{x+y} \le \max\{\abs{x},\abs{y}\}$; and
    \item $\abs{xy} = \abs{x} \cdot \abs{y}$.
  \end{itemize}
  
  The \textsl{valuation ring of $k$} is the subring
  $k^\circ \coloneqq \Set{x \in k \given \abs{x} \leq 1}$. 
  This is a local ring with maximal ideal $k^{\circ\circ}
  \coloneqq \Set{x \in k \given \abs{x} < 1}$. The \textsl{value group of 
  $k$} is the group $\abs{k^\times}$. 
\end{definition}
  
\begin{convention} 
  \label{conv:nontrivial}
  By a \textsl{non-Achimedean field} we mean a real-valued field that is
  complete with respect to the metric $\abs{x-y}$ induced by $\abs{}$, and
  whose value group $\abs{k^\times}$ is not the trivial group.
  This latter assumption implies that $k^{\circ}$ has Krull dimension $1$ \cite[Thm.\ 10.7]{Mat89}.
  \end{convention}

Throughout this paper, we will only consider real-valued fields that are non-Archimedean, with the exception of the auxiliary field $M$ in Theorem 
\ref {thm:main-example}.

\medskip
  
\par We will use the following well-known property of valuations.
  
\begin{lemma}[see {\cite[Prop.\ 2.1/2]{Bos14}}]
  \label{lem:max-norm}
  Let $(k,\abs{})$ be a non-Archimedean field. For $x, y \in k$, if 
  $\abs{x} \neq \abs{y}$, then
  \[
    \abs{x+y} = \max\bigl\{\abs{x},\abs{y}\bigr\}.
  \]
\end{lemma}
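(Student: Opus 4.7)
The plan is to prove the lemma by a short symmetric argument using only the three axioms of a non-Archimedean valuation, with the ultrametric inequality applied twice. Without loss of generality I would assume $\abs{y} < \abs{x}$, so that $\max\{\abs{x},\abs{y}\} = \abs{x}$, and the goal reduces to showing $\abs{x+y} = \abs{x}$.

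First, the ultrametric inequality gives the upper bound $\abs{x+y} \le \max\{\abs{x},\abs{y}\} = \abs{x}$ immediately. For the matching lower bound, I would use the standard trick of rewriting $x$ as a difference: $x = (x+y) + (-y)$. Applying the ultrametric inequality to this expression yields $\abs{x} \le \max\{\abs{x+y},\abs{-y}\} = \max\{\abs{x+y},\abs{y}\}$, where I have used that $\abs{-y} = \abs{y}$ (which follows from multiplicativity together with $\abs{1} = 1$, since $\abs{-1}^2 = \abs{1} = 1$ and $\abs{-1} \ge 0$ force $\abs{-1} = 1$).

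Now the assumption $\abs{y} < \abs{x}$ rules out the possibility that the maximum on the right is $\abs{y}$, since that would give $\abs{x} \le \abs{y}$, contradicting our hypothesis. Hence the maximum must be $\abs{x+y}$, so $\abs{x} \le \abs{x+y}$. Combining with the earlier upper bound produces the desired equality. I do not anticipate any genuine obstacle here; the only point of care is the symmetric case assumption and the quick verification that $\abs{-y}=\abs{y}$, both of which are entirely routine consequences of the axioms recalled just above the lemma statement.
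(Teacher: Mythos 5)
Your proof is correct, but it takes a different (and equally standard) route from the paper's. You establish the equality by two applications of the ultrametric inequality: the upper bound $\abs{x+y}\le\abs{x}$ directly, and the lower bound by decomposing $x=(x+y)+(-y)$ and observing that the hypothesis $\abs{y}<\abs{x}$ forces the maximum on the right to be $\abs{x+y}$. The paper instead assumes without loss of generality that $\abs{x}<\abs{y}$, factors $x+y=y(1+x/y)$, and notes that $x/y$ lies in the maximal ideal $k^{\circ\circ}$ of the valuation ring, so $1+x/y$ is a unit of $k^\circ$ and therefore has absolute value exactly $1$; multiplicativity then gives $\abs{x+y}=\abs{y}$. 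Your argument is purely metric and entirely self-contained from the three axioms (including your careful check that $\abs{-y}=\abs{y}$), whereas the paper's leverages the ring-theoretic structure of $k^\circ$ as a local ring. Both are complete; yours avoids invoking any facts about the valuation ring, while the paper's phrasing foreshadows the role $k^\circ$ and $k^{\circ\circ}$ play later in the article.
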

  
  \par We next define the analogue of a vector space over a non-Archimedean field.
  
\begin{definition}
  \label{def:normed-space}
  Let $(k,\abs{})$ be a non-Archimedean field. A \textsl{normed space} $(E,\norm{})$
  over $k$ is a $k$-vector space $E$ with a \textsl{norm} $\norm{}\colon E \rightarrow \RR_{\ge 0}$
  that satisfies the following properties:
  \begin{itemize}
  \item $\norm{x} = 0$ if and only if $x = 0$;
  \item $\norm{x+y} \leq \max\{\norm{x},\norm{y}\}$; and
  \item If $c \in k$ and $x \in E$, then $\norm{cx} = \abs{c} \cdot \norm{x}$.
  \end{itemize}
  If $E$ is complete in the metric induced by $\norm{}$, then $E$ is called a
  \textsl{Banach space} over $k$.
  
  \par A \textsl{Banach $k$-algebra} $(A,\norm{})$ is a $k$-algebra $A$ such that
  $(A,\norm{})$ is a $k$-Banach space and such that the norm $\norm{}$ satisfies the following
  additional property:
  \begin{itemize}
  \item $\norm{xy} \leq \norm{x} \cdot \norm{y}$.
  \end{itemize}
  The norm $\norm{}$ is \textsl{multiplicative} if equality holds in the above
  inequality.
\end{definition}
  
  \par All Banach algebras considered in this paper will be multiplicative.
  
  \begin{remark}
  \label{rem:extending-norms}
  \leavevmode
  \begin{enumerate}[label=$(\roman*)$,ref=\roman*]
  \item\label{fin-dim-norm} A finite-dimensional vector space $E$ over a 
  non-Archimedean field $(k,\abs{})$
  can always be given the structure of a $k$-Banach space in a canonical way. 
  Indeed, if $x_1,x_2,\dots,x_n$
  is a basis of $E$, then for every element $x = \sum_i a_ix_i$ where $a_i \in k$, 
  one can define
  \[
    \norm{x} \coloneqq \max_{1 \le i \le n}\bigl\{\abs{a_i}\bigr\}.
  \]
  Now, because $k$ is
  complete with respect to $\abs{}$, 
  every norm on $E$ is equivalent to the one just defined, even
  though this norm depends on the choice of the basis \cite[App.\ A, Thm.\ 1]{Bos14}.
  It is fairly straightforward to verify that $E$ is complete in the
  above norm.
  
  \item\label{alg-ext-norm} When $\ell$ is an algebraic extension of $k$, by expressing
  $\ell$ as a filtered colimit of finite subextensions, one can show that there
  exists a unique (not just equivalent) norm on $\ell$ that extends the norm
  on $k$ \cite[App.\ A, Thm.\ 3]{Bos14}. However, if $[\ell:k] = \infty$, 
  then $\ell$ need not be complete with respect to the metric induced by this norm.
  \end{enumerate}
  \end{remark}
  
   \par The principal example
  of a Banach $k$-algebra is the Tate algebra.
  
\begin{definition}[see {\cite[Def.\ 2.2/2]{Bos14}}]
\label{def:Tate-alg}
  Let $(k,\abs{})$ be a non-Archimedean field.
  For every positive integer $n > 0$, the \textsl{Tate algebra} in $n$
  indeterminates over $k$ is the $k$-subalgebra
  \[
    T_n(k) \coloneqq k\{ X_1,X_2,\ldots,X_n \}
    \coloneqq
    \Set[\Bigg]{\sum_{\nu \in \ZZ_{\ge0}^n} a_\nu X^{\nu} \given
      \begin{array}{@{}c@{}}
        a_\nu \in k\ \text{and}\ \abs{a_\nu} \to 0\\
        \text{as}\ \nu_1 + \nu_2 + \cdots + \nu_n \to \infty
      \end{array}}
  \]
  of
   the formal power
  series $k\llbracket X_1,X_2,\ldots,X_n \rrbracket$ in $n$ indeterminates over $k$.
   An element of $T_n(k)$ is called a \textsl{restricted power series}. For $n = 1$,
  we will denote the indeterminate in $T_1(k)$ by just $X$ instead of $X_1$.

	\par The Tate algebra becomes a $k$-Banach algebra when equipped with 
	the \textsl{Gauss norm}, which is defined as follows. 
	For every element $\sum_{\nu \in \ZZ_{\ge 0}^n} a_\nu
X^{\nu} \in T_n(k)$, we set
\[
  \norm[\Bigg]{\sum_{\nu \in \ZZ_{\ge 0}^n} a_\nu X^{\nu}} \coloneqq
  \max_{\nu \in \ZZ_{\ge0}^n}\bigl\{\abs{a_\nu}\bigr\}.
\]
One can show that the Gauss norm is multiplicative \cite[pp.\ 13--14]{Bos14}.
\end{definition}

\par The remarkable fact is that $T_n(k)$ shares many properties of the polynomial ring 
$k[X_1,\dots,X_n]$. We collect these properties for
readers who may be unfamiliar with $T_n(k)$.

\begin{theorem}\label{thm:tateprops}
  Let $(k,\abs{})$ be a non-Archimedean field, and let $n$ be a
  positive integer.
  Then, the Tate algebra $T_n(k)$ satisfies the following properties:
  \begin{enumerate}[label=$(\roman*)$,ref=\roman*]
    \item\label{thm:tatenoeth} $T_n(k)$ is Noetherian.
    \item\label{thm:tateufd} $T_n(k)$ is a unique factorization domain.
    \item\label{thm:tatejacobson} $T_n(k)$ is Jacobson, that is, every
      radical ideal is the intersection of the maximal ideals containing it.
    \item\label{thm:tatekrulldim}
      All maximal ideals of $T_n(k)$ are generated by $n$ elements and have
      height $n$. In particular, $T_n(k)$ has Krull dimension $n$.
  \item\label{thm:tatereg} $T_n(k)$ is regular.
  \item\label{thm:T_1euclid} $T_1(k)$ is a Euclidean domain with  
  associated Euclidean function $T_1 \smallsetminus \{0\} \rightarrow \mathbf{Z}_{\geq 0}$
  given by mapping a restricted power series $f = \sum_{i=0}^\infty a_iX^i$
  to the largest index $N$ such that $\abs{a_N} = \norm{f}$.
  \item\label{thm:kiehlexcellent} $T_n(k)$ is excellent.
    \end{enumerate}
\end{theorem}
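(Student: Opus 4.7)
The theorem collects properties of $T_n(k)$ that all follow from two foundational tools: the Weierstrass preparation and division theorems, together with Kiehl's theorem on excellence for (vii). My plan is to first establish Weierstrass division and then read off each of (i)--(vi) from it, citing \cite{Kie69} for (vii).

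The heart of the argument is to prove the following division statement: if $g \in T_n(k)$ is $X_n$-distinguished of some order $s$ (meaning the coefficient of $X_n^s$ in $g(0,\ldots,0,X_n)$ attains the Gauss norm of $g$ and dominates all other coefficients with $\nu_n \ge s$), then every $f \in T_n(k)$ decomposes uniquely as $f = qg + r$ with $q \in T_n(k)$ and $r \in T_{n-1}(k)[X_n]$ of $X_n$-degree less than $s$. This is what I expect to be the main technical obstacle: one builds $q$ and $r$ as limits of Cauchy sequences in the Banach algebra $T_n(k)$ and must carefully control the Gauss norm of the iterates to guarantee convergence and uniqueness. A preliminary change of variables $X_i \mapsto X_i + X_n^{c_i}$ with suitably large integers $c_i$ would be used to ensure that any nonzero $g$ can be assumed distinguished. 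Division then yields Weierstrass preparation: every distinguished $g$ factors uniquely as $u\omega$ with $u \in T_n(k)^\times$ and $\omega \in T_{n-1}(k)[X_n]$ a monic polynomial of degree $s$.

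With these tools, the remaining parts follow by induction on $n$, with $T_0(k) = k$ as base. For (\ref{thm:tatenoeth}), Weierstrass division shows $T_n(k)/(\omega)$ is finite over $T_{n-1}(k)$ for any Weierstrass polynomial $\omega$, which together with Hilbert's basis theorem yields Noetherianity. For (\ref{thm:tateufd}), preparation reduces factorization to factorization in $T_{n-1}(k)[X_n]$, which is a UFD by induction and Gauss's lemma. For (\ref{thm:tatejacobson}) and (\ref{thm:tatekrulldim}), a Noether-normalization-type statement obtained by iterating Weierstrass preparation shows that for any maximal ideal $\fm \subset T_n(k)$, the quotient $T_n(k)/\fm$ is finite and hence algebraic over $k$; this simultaneously produces $n$ generators for $\fm$, pins down the height and Krull dimension as $n$, and gives the Jacobson property by ruling out a nonzero element lying in every maximal ideal (after distinguished reduction such an element would have only finitely many zeros in an algebraic closure).

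Property (\ref{thm:tatereg}) then follows formally: at each maximal $\fm$, the localization $T_n(k)_\fm$ is an $n$-dimensional Noetherian local ring whose maximal ideal is generated by $n$ elements, hence regular, and regularity at all maximal ideals extends to all primes in a Noetherian ring by Serre's theorem. Property (\ref{thm:T_1euclid}) drops out of one-variable Weierstrass division: for nonzero $g = \sum a_i X^i \in T_1(k)$, the index $N$ in the statement is precisely the distinguished order (after scaling so $\norm{g}=1$), and division of any $f$ by $g$ produces a remainder of $X$-degree less than $N$, making $N$ a Euclidean function once one checks multiplicativity via the Gauss norm. Finally, (\ref{thm:kiehlexcellent}) is Kiehl's theorem \cite[Thm.\ 3.3]{Kie69}, which I would cite rather than reprove, as it requires a separate substantial argument establishing openness of the regular locus together with geometric regularity of the formal fibers over every prime.
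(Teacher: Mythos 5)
Your proposal is correct and follows essentially the same route as the paper, which gives only an indication of proof: it cites \cite[Props.\ 2.2/14--17 and Cor.\ 2.2/10]{Bos14} for parts (i)--(iv) and (vi), deduces (v) from (iv) exactly as you do, cites \cite[Thm.\ 3.3]{Kie69} for (vii), and notes that the Weierstrass Division and Preparation theorems are the key ingredients underlying all of these --- precisely the strategy you outline.
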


\begin{proof}[Indication of proof]
  $(\ref{thm:tatenoeth})$--$(\ref{thm:tatekrulldim})$ are proved in
  \cite[Props.\ 2.2/14--17]{Bos14}.
  $(\ref{thm:tatereg})$ follows from $(\ref{thm:tatekrulldim})$, since the
  latter implies that all localizations of $T_n(k)$ at maximal ideals
  are regular local. $(\ref{thm:T_1euclid})$ follows from \cite[Cor.\ 2.2/10]{Bos14}.
  Finally, $(\ref{thm:kiehlexcellent})$ is the hardest property to show,
and can be found in \cite[Thm.\ 3.3]{Kie69} (see also \cite[\S1.1]{Con99}). Key ingredients in the proofs
of all these properties are the Weierstrass Division and the Weierstrass Preparation
theorems \cite[Thm.\ 2.2/8 and Cor.\ 2.2/9]{Bos14}.
\end{proof}

\par For proofs of the Weierstrass Division and Preparation theorems, one
works with certain distinguished elements in $T_n(k)$ that
we now introduce. These elements should be thought of as power series analogues
of essentially monic polynomials (with respect to one of the variables) in polynomial rings. 

\begin{definition}[{see \cite[Def.\ 2.2/6]{Bos14}}]
\label{def:Weierstrass-dist}
A restricted power series $g = \sum_{\nu=0}^\infty g_\nu X^\nu_n \in T_n(k)$
with coefficients $g_\nu \in T_{n-1}(k)$ is called \textsl{$X_n$-distinguished
of order $s \in \mathbf{Z}_{\geq 0}$} if the following hold:
\begin{itemize}
  \item $g_s$ is a unit in $T_{n-1}(k)$; and
	\item $\norm{g_s} = \norm{g}$ and $\norm{g_s} > \norm{g_\nu}$ for every $\nu > s$.
\end{itemize}
\end{definition}

\begin{remark}\label{rem:units}
\leavevmode
\begin{enumerate}[label=$(\roman*)$,ref=\roman*]
    \item \label{units} An element $f \in T_n(k)$ is a unit if and only if the constant term
    of $f$ has absolute value strictly bigger than the absolute value of the coefficients 
    of all other terms of $f$ \cite[Cor.\ 2.2/4]{Bos14}. In the language of distinguished
    elements, $f \in T_n(k)$ is a unit precisely when it is $X_n$-distinguished of order $0$.

	\item \label{terms} An $X_n$-distinguished $g \in T_n(k)$ always has a term that 
    just involves the variable $X_n$. Indeed, if 
    $g = \sum_{\nu=0}^\infty g_\nu X^\nu_n \in T_n(k)$ is distinguished of order 
    $s$, then $g_s$, being a unit, has a nonzero constant term. Then $g$ has a term
    of the form $a_sX^s_n$, where $a_s \in k^\times$ is the constant term of $g_s$.
    
    \item \label{T1-distinguished} Every nonzero element of $T_1(k) = k\{ X \}$ 
    is $X$-distinguished of order equal to the value of the element under the
    associated Euclidean function that gives $T_1(k)$ the structure of a Euclidean domain
    (see Theorem \ref{thm:tateprops}$(\ref{thm:T_1euclid})$).
\end{enumerate}
\end{remark}

\par The next result can be thought of as the Tate algebra
analogue of a technique used in the proof of Noether normalization
for finite type algebras over a field that makes polynomials 
essentially monic in one of the variables upon applying a suitable ring automorphism. 
In fact, this result is
used to prove
the rigid-analytic analogue of Noether normalization
for quotients of Tate algebras.

\begin{lemma}[see {\cite[Lem.\ 2.2/7]{Bos14}}]
\label{lem:rigid-distinguished}
Given finitely many nonzero elements $g_1, g_2,\ldots, g_r \in T_n(k)$,
there is a $k$-algebra automorphism (automatically continuous)
\[
  \sigma\colon T_n(k) \longrightarrow T_n(k), \qquad
  X_i \longmapsto \begin{cases}
    X_i + X_n^{\alpha_i} & \text{for}\ i < n\\
    X_n & \text{for}\ i = n
  \end{cases}
\]
for some $\alpha_1,\alpha_2,\ldots,\alpha_{n-1} \in
\ZZ_{\ge0}$ such that the elements $\sigma(g_1),\sigma(g_2),\ldots,\sigma(g_r)$
are $X_n$-distinguished.
\end{lemma}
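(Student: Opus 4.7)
The plan is to choose each $\alpha_i$ large enough so that the weight function $w(\nu) \coloneqq \alpha_1\nu_1 + \cdots + \alpha_{n-1}\nu_{n-1} + \nu_n$ separates the finitely many multi-indices at which each $g_j$ attains its Gauss norm; the distinguished order of $\sigma(g_j)$ will then be read off as the maximum of $w$ over these top-norm indices of $g_j$. Write $g_j = \sum_{\nu} a_{j,\nu} X^{\nu}$, put $M_j \coloneqq \norm{g_j}$, and set $S_j \coloneqq \Set{\nu \given \abs{a_{j,\nu}} = M_j}$; since $\abs{a_{j,\nu}} \to 0$, each $S_j$ and their union $S \coloneqq \bigcup_j S_j$ are finite. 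I will pick a positive integer $N$ strictly exceeding every coordinate of every $\nu \in S$ and set $\alpha_i \coloneqq N^i$ for $i = 1,\ldots,n-1$, whereupon a standard base-$N$ encoding makes $w$ injective on $S$. For each $j$, let $\nu_j^{\star} \in S_j$ be the unique maximizer of $w$ on $S_j$ and set $s_j \coloneqq w(\nu_j^{\star})$. The resulting $\sigma$ is automatically a continuous $k$-algebra automorphism: it sends each $X_i$ to an element of norm $\le 1$, so extends to a bounded endomorphism of $T_n(k)$, and $X_i \mapsto X_i - X_n^{\alpha_i}$ for $i < n$ defines its inverse.

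To verify that $\sigma(g_j)$ is $X_n$-distinguished of order $s_j$, I would expand $\sigma(g_j) = \sum_d h_{j,d} X_n^d$ with $h_{j,d} \in T_{n-1}(k)$ via the multinomial theorem; the coefficient of $X_1^{k_1}\cdots X_{n-1}^{k_{n-1}}$ in $h_{j,d}$ becomes $\sum_\nu a_{j,\nu}\prod_i \binom{\nu_i}{k_i}$ summed over $\nu$ with $\nu_i \ge k_i$ for $i<n$ and $w(\nu) = d + \sum_i \alpha_i k_i$, each binomial integer having non-Archimedean norm $\le 1$. Three observations then suffice. First, if $d > s_j$, any contributing $\nu$ satisfies $w(\nu) \ge d > s_j$, so $\nu \notin S_j$ and $\abs{a_{j,\nu}} < M_j$; since $\abs{a_{j,\nu}} \to 0$, the supremum of these absolute values is attained and strictly less than $M_j$, yielding $\norm{h_{j,d}} < M_j$. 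Second, the constant term of $h_{j,s_j}$ is $a_{j,\nu_j^{\star}}$ plus a sum of $a_{j,\nu}$'s with $\nu \notin S_j$ (by the injectivity of $w$ on $S_j$ together with the maximality of $\nu_j^{\star}$), and Lemma \ref{lem:max-norm} gives that this constant term has absolute value exactly $M_j$. Third, for $k \neq 0$ every $\nu$ contributing to the $X_1^{k_1}\cdots X_{n-1}^{k_{n-1}}$-coefficient of $h_{j,s_j}$ satisfies $w(\nu) = s_j + \sum_i \alpha_i k_i > s_j$ and hence lies outside $S_j$, so the same bound as before forces that coefficient to have norm $<M_j$. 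Combined with Remark \ref{rem:units}$(\ref{units})$, the last two observations show $h_{j,s_j}$ is a unit in $T_{n-1}(k)$, and the first observation supplies the remaining distinguishedness condition.

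The only real subtlety is in the first and third observations: the coefficient of a fixed monomial in $h_{j,d}$ is a priori an infinite sum over $\nu$, and I need its norm to be \emph{strictly} below $M_j$ rather than merely $\le M_j$. This is where the restricted-power-series condition $\abs{a_{j,\nu}} \to 0$ is essential — it guarantees that only finitely many $\abs{a_{j,\nu}}$ exceed any threshold below $M_j$, so that the supremum of $\abs{a_{j,\nu}}$ over $\nu \notin S_j$ is an attained maximum and hence lies strictly below $M_j$.
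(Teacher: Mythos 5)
Your argument is correct and is essentially the standard proof of this lemma (the paper itself gives no proof, only the citation to Bosch, Lem.\ 2.2/7, whose argument is the same weight-function/base-$N$ separation trick you use). The one point worth double-checking — that the a priori infinite coefficient sums in $h_{j,d}$ have norm \emph{strictly} below $M_j$ because the supremum over $\nu \notin S_j$ is an attained maximum — is exactly the subtlety you identified and handled via $\abs{a_{j,\nu}} \to 0$.
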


\subsection{Continuous maps, spherically complete fields, and the Hahn--Banach
extension property}
\label{subsection:NA-functional-analysis} We begin by giving
alternate characterizations of continuous maps of normed spaces. Recall that
by Convention \ref{conv:nontrivial}, all non-Archimedean fields are complete 
and non-trivially valued. Given a normed space $(E,\norm{})$,
we will say that a subset $S \subseteq E$ is \textsl{bounded} if and only if there
exists some $a \in \RR_{\geq 0}$ such that $S$ is contained in the closed ball $B_a(0)$
of radius $a$ centered at $0 \in E$.

\begin{lemma}
\label{lem:continuous}
Let $(k,\abs{})$ be a non-Archimedean field and $(E,\norm{}_E)$, $(F,\norm{}_F)$
be normed spaces. Then, for a $k$-linear map $f\colon E \rightarrow F$, the 
following are equivalent:
\begin{enumerate}[label=$(\roman*)$,ref=\roman*]
  \item\label{lem:continuouscont} $f$ is continuous.
  \item\label{lem:continuousnull} $f$ maps null sequences to null sequences.
  \item\label{lem:null-to-bounded} $f$ maps null sequences to bounded sequences.
  \item\label{lem:bounded} $f$ maps bounded sets to bounded sets.
  \item\label{lem:one-bounded} There exists $a, b \in \RR_{> 0}$ such that 
  $f(B_a(0)) \subseteq B_b(0)$.
  \item\label{lem:continuousconstb} There exists $B \in \RR_{> 0}$ such that for
    all $x \in E$, we have 
	$\norm{f(x)}_F \leq B\cdot\norm{x}_E$.
\end{enumerate}
\end{lemma}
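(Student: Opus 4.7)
The plan is to establish the cyclic chain of implications $(\ref{lem:continuouscont}) \Rightarrow (\ref{lem:continuousnull}) \Rightarrow (\ref{lem:null-to-bounded}) \Rightarrow (\ref{lem:bounded}) \Rightarrow (\ref{lem:one-bounded}) \Rightarrow (\ref{lem:continuousconstb}) \Rightarrow (\ref{lem:continuouscont})$. The two non-trivial implications are those that pass from a bounded-image condition back to a scalar estimate, and both of them use in an essential way Convention~\ref{conv:nontrivial} that $\abs{k^\times}$ is non-trivial, so I would focus most attention there.

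The easy half is straightforward: $(\ref{lem:continuouscont})\Rightarrow(\ref{lem:continuousnull})$ is immediate from linearity (so $f(0)=0$) and sequential continuity; $(\ref{lem:continuousnull})\Rightarrow(\ref{lem:null-to-bounded})$ is automatic since null sequences are bounded; $(\ref{lem:bounded})\Rightarrow(\ref{lem:one-bounded})$ just specializes to the unit ball; and $(\ref{lem:continuousconstb})\Rightarrow(\ref{lem:continuouscont})$ follows because the estimate $\norm{f(x)-f(y)}_F\leq B\norm{x-y}_E$ makes $f$ Lipschitz, hence continuous.

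For $(\ref{lem:null-to-bounded})\Rightarrow(\ref{lem:bounded})$, I would argue by contraposition: if $S\subseteq E$ is bounded but $f(S)$ is unbounded, pick $x_n\in S$ with $\norm{f(x_n)}_F\to\infty$. Fix $\pi\in k^\times$ with $\abs{\pi}>1$, which exists by the non-triviality of $\abs{k^\times}$, and for each $n$ choose an integer $m_n$ with $\abs{\pi}^{m_n}\leq\norm{f(x_n)}_F^{1/2}<\abs{\pi}^{m_n+1}$. Set $\lambda_n\coloneqq\pi^{-m_n}$. Then $\abs{\lambda_n}\to 0$ (because $\norm{f(x_n)}_F\to\infty$), so $\lambda_n x_n$ is a null sequence in $E$ by boundedness of $S$, while $\norm{f(\lambda_n x_n)}_F=\abs{\lambda_n}\norm{f(x_n)}_F\geq\abs{\pi}^{-1}\norm{f(x_n)}_F^{1/2}\to\infty$, contradicting~(\ref{lem:null-to-bounded}).

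For $(\ref{lem:one-bounded})\Rightarrow(\ref{lem:continuousconstb})$, the same non-triviality of the value group lets me rescale arbitrary nonzero $x\in E$ into the ball $B_a(0)$ in a controlled way. Fix $\pi\in k$ with $0<\abs{\pi}<1$, and for nonzero $x\in E$ choose the unique integer $m$ with $\abs{\pi}^m\leq a/\norm{x}_E<\abs{\pi}^{m-1}$. Then $\pi^m x\in B_a(0)$, so by~(\ref{lem:one-bounded}) we get $\abs{\pi}^m\norm{f(x)}_F=\norm{f(\pi^m x)}_F\leq b$, whence
\[
\norm{f(x)}_F\leq b\abs{\pi}^{-m}\leq \frac{b}{a\abs{\pi}}\norm{x}_E,
\]
using $\abs{\pi}^{-m}\leq\abs{\pi}^{-1}\norm{x}_E/a$. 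This gives~(\ref{lem:continuousconstb}) with $B=b/(a\abs{\pi})$. The main obstacle in the whole proof is exactly this need to produce scalars of prescribed approximate size, which is why one must invoke Convention~\ref{conv:nontrivial}; in the trivially valued case there would be no such $\pi$ and the equivalence would break down.
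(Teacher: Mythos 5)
Your proof is correct and takes essentially the same route as the paper's: the identical cyclic chain of implications, the same square-root rescaling trick (producing scalars $\lambda_n$ with $\abs{\lambda_n}$ comparable to $\norm{f(x_n)}_F^{-1/2}$) for $(\ref{lem:null-to-bounded})\Rightarrow(\ref{lem:bounded})$, and the same rescale-into-the-ball argument via powers of a fixed scalar for $(\ref{lem:one-bounded})\Rightarrow(\ref{lem:continuousconstb})$. No substantive differences to report.
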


\begin{proof}
Since a null sequence converges
to $0$, $(\ref{lem:continuouscont}) \Rightarrow
(\ref{lem:continuousnull})$ follows by the continuity of $f$ and the fact that
$f$ maps $0$ to $0$. $(\ref{lem:continuousnull})\Rightarrow
(\ref{lem:null-to-bounded})$ holds because null sequences are bounded.

\par For $(\ref{lem:null-to-bounded})\Rightarrow(\ref{lem:bounded})$,
assume for contradiction that there exists some $a \in \RR_{> 0}$ such
that $f(B_a(0))$ is unbounded. Then, there exists a sequence 
$(x_n)_n \subseteq B_a(0)$ and a sequence $(c_n)_n \subseteq k^\times$ such that 
\[\sqrt{\norm{f(x_n)}_F} \ge \abs{c_n}\]
for all $n > 0$,
and $\abs{c_n} \rightarrow \infty$ as $n \rightarrow \infty$. Here we use the 
non-triviality of $\abs{k^\times}$ to get the sequence $(c_n)_n$ with
said properties.
Since $(x_n)_n$ is bounded, $(c^{-1}_nx_n)_n$ is a null sequence
whose image is an unbounded sequence, because 
\[
\norm{f(c^{-1}_nx_n)}_F = \abs{c^{-1}_n}\cdot\norm{f(x_n)}_F \geq \sqrt{\norm{f(x_n)}_F}.
\] 
This contradicts $(\ref{lem:null-to-bounded})$, completing the proof of
$(\ref{lem:null-to-bounded})\Rightarrow (\ref{lem:bounded})$.

\par The proof of $(\ref{lem:bounded}) \Rightarrow (\ref{lem:one-bounded})$
is clear. 
For $(\ref{lem:one-bounded}) \Rightarrow (\ref{lem:continuousconstb})$, 
by non-triviality of
$\abs{k^\times}$, we may choose a nonzero $c \in k$ such that
$0 < \abs{c} < 1$, $\abs{c} \leq a$, and consequently,
$f(B_{\abs{c}}(0)) \subseteq B_b(0)$.
Let $x \in E$ and choose $m \in \mathbf{Z}$
such that
\begin{equation}
\label{eq:clever-m}
\abs{c}^{m+2} < \norm{x}_E \leq \abs{c}^{m+1}. 
\end{equation}
We have $\norm{c^{-m}x}_E \leq \abs{c}$, and so,
\[
  \norm{f(x)}_F = \abs{c}^m\,\norm{f(c^{-m}x)}_F \leq \abs{c}^m\,b \leq
  \abs{c}^{-2}\,b\,\norm{x}_E,
\]
where the middle inequality follows because $f(B_{\abs{c}}(0)) \subseteq B_b(0)$, and the final
inequality follows from \eqref{eq:clever-m}. Then taking $B \coloneqq
\abs{c}^{-2}\,b$, we get
$(\ref{lem:one-bounded}) \Rightarrow (\ref{lem:continuousconstb})$. 

\par Finally, $(\ref{lem:continuousconstb}) \Rightarrow (\ref{lem:continuouscont})$ follows 
by using
the $\epsilon$-$\delta$ definition of continuity. This finishes the proof of the Lemma.
\end{proof}

\par Thus, by Lemma \ref{lem:continuous}, 
for a continuous linear map $f\colon (E,\norm{}_E) \rightarrow (F,\norm{}_F)$ 
of normed spaces,
\[
  \sup_{x \ne 0}\biggl\{\frac{\norm{f(x)}_F}{\norm{x}_E}\biggr\}
\]
is finite. In other words, all continuous maps of normed spaces
are \textsl{bounded continuous}.

\par We next introduce the Hahn--Banach extension property over non-Archimedean fields. 
The corresponding extension property over $\RR$ or $\mathbf{C}$ is one of the
most important results in functional analysis.

\begin{definition}[see {\cite[p.\ 170]{PGS10}}]
\label{def:Hahn-Banach}
A normed space $(E,\norm{})$ over a non-Archimedean field $(k,\abs{})$
satisfies the \textsl{$(1+\epsilon)$-Hahn--Banach extension property} if for every 
subspace $D$ of $E$, for every $\epsilon > 0$, and for every linear functional 
$f\colon D \rightarrow k$
such that $\abs{f(x)} \leq \norm{x}$ for all $x \in D$, there exists
a linear functional $\widetilde{f}\colon E \rightarrow k$ extending $f$ 
such that for
all $x \in E$, we have
\[
  \abs{\widetilde{f}(x)} \leq (1+\epsilon)\norm{x}.
\]
We say
$E$ satisfies the \textsl{Hahn--Banach extension property} if $\epsilon$
can be chosen to be $0$ for the extension $\widetilde{f}$, that is,
if there exists an extension $\widetilde{f}$ such that $\abs{\widetilde{f}(x)} 
\leq \norm{x}$ for all $x \in E$.
\end{definition}

\par The terminology ``$(1+\epsilon)$-Hahn--Banach extension property'' is used in passing in
\cite{PGS10}, but it is convenient.
Note that both $f$ and its extension $\widetilde{f}$ are continuous by Lemma
\ref{lem:continuous}. In
particular, the $(1+\epsilon)$-Hahn--Banach extension property guarantees that $E$ has
many continuous linear functionals. We are particularly interested in
conditions on the non-Archimedean field $k$ which guarantee that every normed
space over $k$ satisfies the $(1+\epsilon)$-Hahn--Banach extension property
or the usual Hahn--Banach extension property. 
This leads to the notion of a spherically complete
field, which should be viewed as a generalization of a (complete) non-Archimedean field
with a discrete value group.

\begin{definition}
\label{def:spherically-complete}
A non-Archimedean field $(k,\abs{})$ is \textsl{spherically complete}
if, for every decreasing sequence of closed disks 
\[
  D_1 \supseteq D_2 \supseteq D_3 \supseteq \cdots
\]
the intersection $\bigcap_n D_n$ is non-empty.
\end{definition}

\begin{remark}
\label{rem:spherical-comp-facts}
  \leavevmode
\begin{enumerate}[label=$(\roman*)$,ref=\roman*]
  \item The defining property of spherical completeness implies completeness,
	even though our convention is that non-Archimedean fields are complete
  (see \cite[p.\ 24]{vR78}).
	
\item\label{rem:spherical-comp-discrete}
  If $\abs{k^\times} \cong \ZZ$, then $k$ is spherically complete. In other
	words, the fraction field of a complete discrete valuation ring is spherically
  complete in the topology induced by the valuation (see \cite[Cor.\ 2.4]{vR78}).
	
  \item\label{rem:spherical-comp-exists}
    There exist non-Archimedean fields that are not spherically complete.
    For example,
	$\mathbf{C}_p$, the completion of the algebraic closure of $\mathbf{Q}_p$,
  is not spherically complete \cite[Cor.\ 20.6]{Sch06}. However, every 
  non-Archimedean field admits
  an embedding into a spherically complete field \cite[Satz 24]{Kru32} (see also
  \cite[Thm.\ 4.49]{vR78}).
	
	\item\label{maximal-extensions} A non-Archimedean field $(k,\abs{})$ is 
	spherically complete
    if and only if $k$ admits no proper immediate extensions \cite[Thm.\
    4]{Kap42} (see also \cite[Thm.\ 4.47]{vR78}).
	Recall that we say that an extension of real-valued fields
	$(k,\abs{}_k) \hookrightarrow (\ell, \abs{}_\ell)$ (i.e.\ an extension of
  fields such that $\abs{}_\ell$
	restricted to $k$ equals $\abs{}_k$) is an \textsl{immediate
	extension} if $\abs{k^\times}_k = \abs{\ell^\times}_\ell$ and the 
	induced map on residue fields 
	$k^\circ/k^{\circ\circ} \hookrightarrow \ell^\circ/\ell^{\circ\circ}$
	is an isomorphism.
\end{enumerate}
\end{remark}

\par Every normed space over a spherically complete field satisfies the Hahn--Banach 
extension property. This, and some related results, are summarized below.

\begin{theorem}
\label{thm:Hahn-Banach}
Let $(E,\norm{})$ be a normed space over a non-Archimedean field $(k,\abs{})$.
\begin{enumerate}[label=$(\roman*)$,ref=\roman*]
  \item\label{thm:Hahn-Banachsph}
    If $k$ is spherically complete (in particular, if $k$ is discretely valued),
    then $E$ satisfies the Hahn--Banach
extension property.

\item\label{thm:Hahn-Banachcountable}
  If $E$ has a dense subspace $V$ which has a countable basis over $k$, then
$E$ satisfies the $(1+\epsilon)$-Hahn--Banach extension property.

\item\label{thm:Hahn-Banachpolar}
  Assume that $\abs{k^\times}$ is not discrete, and
  suppose the norm $\norm{}$ on $E$ is \textsl{polar} in the sense that for every
  \[
    x \notin E^\circ \coloneqq \Set{x \in E \given \norm{x} \leq 1},
  \]
  there exists a linear functional $f\colon E \rightarrow k$ such that
$\abs{f(E^\circ)} \leq 1$ and $\abs{f(x)} > 1$.
For every finite-dimensional 
subspace $D$ of $E$,
if $f\colon D \rightarrow k$
is a linear map that satisfies $\abs{f(x)} \leq \norm{x}$ 
for all $x \in D$, then
for all $\epsilon > 0$, there exists an extension 
$\widetilde{f}\colon E \rightarrow k$ of $f$ such that 
$\abs{\widetilde{f}(x)} \leq (1+\epsilon)\norm{x}$ for all 
$x \in E$.
\end{enumerate}
\end{theorem}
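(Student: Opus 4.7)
The three parts share a common reduction to a one-dimensional extension step. Given a subspace $D \subseteq E$, a linear functional $f\colon D \to k$ satisfying $\abs{f(d)} \leq C_0 \norm{d}$, an element $v \in E \setminus D$, and a target bound $C \geq C_0$, extending $f$ to $D + kv$ with bound $C$ amounts to choosing $\alpha = \tilde{f}(v) \in k$ inside the intersection
\[
  \bigcap_{d \in D} B_{C\norm{v-d}}\bigl(f(d)\bigr) \subseteq k.
\]
The inequality $\abs{f(d_1) - f(d_2)} \leq C_0 \norm{d_1 - d_2} \leq C \max\{\norm{v - d_1}, \norm{v - d_2}\}$ combined with Lemma \ref{lem:max-norm} forces the balls above to be pairwise nested, so the family is totally ordered by inclusion. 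The three parts differ only in how one produces a point (or an approximation) in this intersection.

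For $(\ref{thm:Hahn-Banachsph})$, I would take $C = 1$, and spherical completeness of $k$ immediately supplies a point in the nested intersection. A Zorn's lemma argument on the poset of norm-$\leq 1$ extensions of $f$ to subspaces between $D$ and $E$, ordered by extension, then produces a maximal element which by the one-dimensional step must already be defined on all of $E$.

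For $(\ref{thm:Hahn-Banachcountable})$, I would fix positive reals $\epsilon_n$ with $\prod_n (1 + \epsilon_n) \leq 1 + \epsilon$, enumerate a countable $k$-basis $v_1, v_2, \ldots$ of $V$, and set $V_n \coloneqq kv_1 + \cdots + kv_n$. I would extend $f$ inductively from $D + V_{n-1}$ to $D + V_n$ with accumulated bound $\prod_{m \leq n}(1 + \epsilon_m)$: at the $n$th step, pick $w_n \in D + V_{n-1}$ whose distance $\norm{v_n - w_n}$ is within a factor $1 + \epsilon_n$ of the distance from $v_n$ to $D + V_{n-1}$, and set the new value using the previously constructed extension at $w_n$. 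This guarantees the new $\alpha_n$ lies inside every previously required ball once radii are inflated by $1 + \epsilon_n$. The resulting bounded functional on $D + V$ extends by uniform continuity to its closure, which equals $E$ since $V$ is dense and $k$ is complete.

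For $(\ref{thm:Hahn-Banachpolar})$, the polar hypothesis must serve as a substitute for spherical completeness in the finite-dimensional setting. With $D$ finite-dimensional, I would induct on $\dim D$; at each stage, polarity produces a continuous functional on $E$ vanishing on the smaller subspace whose value at the new direction approximates the distance from that direction to the subspace, and the non-discreteness of $\abs{k^\times}$ allows an arbitrary rescaling of this functional to place $\alpha$ inside the enlarged intersection $\bigcap_{d} B_{(1+\epsilon_n)\norm{v-d}}(f(d))$. Choosing the $\epsilon_n$ so that the accumulated loss over $\dim D$ steps stays below $1 + \epsilon$ yields the desired extension. The main obstacle lies here in $(\ref{thm:Hahn-Banachpolar})$: lacking both the spherical completeness of $(\ref{thm:Hahn-Banachsph})$ and the recursion over a countable basis of $(\ref{thm:Hahn-Banachcountable})$, one must use polarity to manufacture the approximating $\alpha$ at each one-dimensional extension, and the interplay of polarity, non-discreteness of $\abs{k^\times}$, and cumulative $(1+\epsilon)$ slack requires careful bookkeeping.
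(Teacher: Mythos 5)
The paper does not actually prove this theorem: parts $(\ref{thm:Hahn-Banachsph})$--$(\ref{thm:Hahn-Banachpolar})$ are quoted from Perez-Garcia and Schikhof ([PGS10, Thms.\ 4.1.1, 4.2.4, and 4.4.5], with Remark \ref{rem:spherical-comp-facts}$(\ref{rem:spherical-comp-discrete})$ handling the discretely valued case). Your proposal instead reconstructs the proofs, and for $(\ref{thm:Hahn-Banachsph})$ and $(\ref{thm:Hahn-Banachcountable})$ it does so correctly: the reduction to a one-dimensional extension step, the observation that the closed balls $B_{C\norm{v-d}}(f(d))$ form a chain by the ultrametric inequality, Zorn's lemma for $(\ref{thm:Hahn-Banachsph})$, and for $(\ref{thm:Hahn-Banachcountable})$ the countable induction with multiplicative budget $\prod_n(1+\epsilon_n)\le 1+\epsilon$ followed by extension by uniform continuity to $\overline{D+V}=E$ --- this is exactly Ingleton's argument and the ``countable type'' argument in the cited source. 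One point worth making explicit in $(\ref{thm:Hahn-Banachsph})$: Definition \ref{def:spherically-complete} is stated for decreasing \emph{sequences} of disks, whereas the one-dimensional step needs a point in an arbitrary nested family; one reduces to a sequence by choosing $d_n$ with $\norm{v-d_n}$ decreasing to $\inf_{d\in D}\norm{v-d}$ and checking that a point of $\bigcap_n B_n$ lies in every ball of the family.

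The soft spot is $(\ref{thm:Hahn-Banachpolar})$, which you yourself flag. As written, ``polarity produces a continuous functional on $E$ vanishing on the smaller subspace'' is not something the definition gives you: polarity separates a point from $E^\circ$, it does not produce functionals with a prescribed kernel. The standard route is to first settle $\dim D=1$ directly: for $D=kv$, use non-discreteness to pick $\mu\in k^\times$ with $\norm{v}/(1+\epsilon)<\abs{\mu}<\norm{v}$, apply polarity to $v/\mu\notin E^\circ$ to obtain $h$ with $\abs{h(x)}\le\norm{x}$ for all $x$ (here density of $\abs{k^\times}$ is used to pass from $\abs{h(E^\circ)}\le 1$ to this bound) and $\abs{h(v)}>\abs{\mu}$, then rescale $h$ so it takes the value $f(v)$ at $v$. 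The induction on $\dim D$ then extends $f|_{D'}$ first and applies the one-dimensional case to the difference $f-\widetilde{f}'|_{D}$, which vanishes on $D'$ and is bounded by a multiple of the quotient seminorm $x\mapsto\inf_{d\in D'}\norm{x-d}$; closing the argument requires knowing that this quotient seminorm is again polar (or an equivalent device), which is precisely the bookkeeping your sketch elides. So $(\ref{thm:Hahn-Banachsph})$ and $(\ref{thm:Hahn-Banachcountable})$ are sound as outlined, while $(\ref{thm:Hahn-Banachpolar})$ needs this additional lemma to be complete.
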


\begin{proof}
$(\ref{thm:Hahn-Banachsph})$ follows from \cite[Thm.\ 4.1.1]{PGS10},
$(\ref{thm:Hahn-Banachcountable})$ from \cite[Thm.\ 4.2.4]{PGS10},
and $(\ref{thm:Hahn-Banachpolar})$ from \cite[Thm.\
4.4.5]{PGS10}.
In $(\ref{thm:Hahn-Banachsph})$, the discretely valued case
follows from the spherically complete case by Remark
\ref{rem:spherical-comp-facts}$(\ref{rem:spherical-comp-discrete})$.
\end{proof}

\begin{remark}
In \cite{PGS10}, norms are assumed to be
\textsl{solid} in the sense of \cite[Def.\ 3.1.1]{PGS10} (see \cite[p.\
93]{PGS10}).
However, the solidity assumption is not used in the results \cite[Thms.\ 4.1.1 and
4.2.4]{PGS10} cited for the proofs of $(\ref{thm:Hahn-Banachsph})$ and
$(\ref{thm:Hahn-Banachcountable})$ above. Moreover, solidity is automatic
if $\abs{k^\times}$ is dense in $\RR_{\geq 0}$, or equivalently, when $\abs{k^\times}$
is not discrete. For $(\ref{thm:Hahn-Banachpolar})$, the notion of polarity was defined by
Schikhof as a common generalization of the situations in
$(\ref{thm:Hahn-Banachsph})$ and $(\ref{thm:Hahn-Banachcountable})$
(see \cite[Def.\ 4.4.1, Lem.\ 4.4.4, and Thm.\ 4.4.3]{PGS10}).
\end{remark}

\par Theorem \ref{thm:Hahn-Banach} raises the natural question of whether
one can construct Banach spaces over a non-Archimedean field whose
continuous dual space is trivial. The next result shows that such spaces
exist for every non-Archimedean field that is not spherically complete 
because any non-Archimedean field embeds into one that is spherically
complete by Remark
\ref{rem:spherical-comp-facts}$(\ref{rem:spherical-comp-exists})$.

\begin{proposition}
\label{prop:trivial-dual-space}
Consider an extension of non-Archimedean fields $(k,\abs{}_k) \hookrightarrow 
(\ell, \abs{}_\ell)$ such that $\ell$ is spherically complete but $k$ is not. Then there
are no nonzero continuous $k$-linear maps $\ell \rightarrow k$.
\end{proposition}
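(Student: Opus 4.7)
The plan is to argue by contradiction: suppose $f\colon\ell\to k$ is a nonzero continuous $k$-linear map. Picking $\gamma\in\ell$ with $f(\gamma)\neq 0$ and replacing $f$ by $x\mapsto f(\gamma x)/f(\gamma)$ (still continuous and $k$-linear) allows us to assume $f(1)=1$; let $B\geq 1$ satisfy $\abs{f(x)}_k\leq B\abs{x}_\ell$ for all $x\in\ell$, using Lemma~\ref{lem:continuous}. Non-spherical-completeness of $k$ furnishes a pseudo-Cauchy sequence $(a_n)\subseteq k$ with no pseudo-limit in $k$: the successive differences $r_n:=\abs{a_{n+1}-a_n}_k$ form a strictly decreasing sequence, and no $\gamma\in k$ satisfies $\abs{\gamma-a_n}_k=r_n$ for all large $n$. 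Completeness of $k$ (Convention~\ref{conv:nontrivial}) forbids $r_n\to 0$, so $r_n\searrow r>0$; spherical completeness of $\ell$ then yields a point $\alpha\in\ell$ in the intersection of the nested closed balls $\overline{B}_\ell(a_n,r_n)$, and a short ultrametric argument (via Lemma~\ref{lem:max-norm}) forces $\abs{\alpha-a_n}_\ell=r_n$ for all large $n$, making $\alpha$ a pseudo-limit of $(a_n)$ in $\ell$ with $\alpha\notin k$.

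Set $\beta:=f(\alpha)\in k$ and $\delta_n:=\beta-a_n\in k$. Continuity gives $\abs{\delta_n}_k=\abs{f(\alpha-a_n)}_k\leq Br_n$, while $\abs{\delta_{n+1}-\delta_n}_k=r_n$. A case analysis via Lemma~\ref{lem:max-norm} on the comparison of $\abs{\delta_n}_k$ with $r_n$ shows: either $\abs{\delta_n}_k=r_n$ for all large $n$, in which case $\beta$ is a pseudo-limit of $(a_n)$ in $k$, directly contradicting our choice of sequence; or $\abs{\delta_n}_k$ is eventually constant at some $V\in(r,Br]$, and then the same Lemma also gives $\abs{\alpha-\beta}_\ell=V$.

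To rule out this second case is the main obstacle. The plan is to rescale and translate: replacing $(a_n)$ by $\bigl((a_n-\beta)/c\bigr)_n$ for any $c\in k^\times$ with $\abs{c}_k=V$ produces a new pseudo-Cauchy sequence of eventual absolute value $1$ in $k$ (still with no pseudo-limit) whose pseudo-limit $\tilde\alpha:=(\alpha-\beta)/c\in\ell$ satisfies $\abs{\tilde\alpha}_\ell=1$ and $f(\tilde\alpha)=0$. After first reducing to the case where $\ell/k$ is immediate --- by restricting $f$ to the canonical spherically complete subfield of $\ell$ obtained by closing $k$ under pseudo-limits of pseudo-Cauchy sequences from $k$ (this subfield is immediate over $k$ and spherically complete, in line with Remark~\ref{rem:spherical-comp-facts}(\ref{maximal-extensions})) --- the common residue class of the $(a_n-\beta)/c$ lifts to some $d\in k^\circ$ with $\abs{d}_k=1$ and $\abs{\tilde\alpha-d}_\ell<1$. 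The identity $f(\tilde\alpha-d)=-d$ together with continuity then forces $\abs{\tilde\alpha-d}_\ell\geq 1/B$. The technical heart of the argument is to iterate this residue-class refinement while carefully bookkeeping how the rescaling factors interact with the continuity bound, driving the eventual constant $V$ down to $r$ and returning us to the already-contradictory pseudo-limit case.
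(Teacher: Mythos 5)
Your opening moves are sound: translating the failure of spherical completeness of $k$ into a pseudo-Cauchy sequence $(a_n)$ with no pseudo-limit in $k$, using completeness to get $r_n\searrow r>0$, producing a pseudo-limit $\alpha\in\ell$, and running the ultrametric dichotomy on $\delta_n=f(\alpha)-a_n$ are all correct. They reduce the problem to the case where $\abs{\delta_n}_k$ stabilizes at some $V$ with $r<V\leq Br$. But that case is where the entire content of the proposition lives --- if one could normalize so that $B\leq 1$, the first branch of your dichotomy would already finish the proof, and such a normalization is not available in general --- and your treatment of it is a plan, not a proof. Two concrete problems. First, the reduction to the immediate case is unjustified: there is no evidently well-defined ``canonical spherically complete subfield of $\ell$ obtained by closing $k$ under pseudo-limits.'' Adjoining a pseudo-limit of a pseudo-Cauchy sequence of \emph{algebraic} type need not produce an immediate extension (this is precisely the delicate point in Kaplansky's theory), and a subfield of $\ell$ maximal among immediate extensions of $k$ need not be spherically complete, because the pseudo-limits that $\ell$ supplies may fail to generate immediate extensions of it. Second, even granting immediacy, the iteration is never carried out: each ``residue-class refinement'' replaces the sequence, the pseudo-limit, and the scale, and you give no argument that the resulting constants $V$ decrease to $r$ rather than stabilizing somewhere in $(r,Br]$, nor that the process terminates. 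Since the whole difficulty is the possibility $B>1$, leaving this step as ``careful bookkeeping'' is a genuine gap.

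For comparison, the paper disposes of the proposition by citation to \cite[Cor.\ 4.3]{vR78} and to the proof of \cite[Thm.\ 2]{vdPvT67}, and the argument there sidesteps the constant $B$ entirely: if $f\colon\ell\to k$ is a nonzero continuous $k$-linear functional, then $\ker f$ is a closed subspace, $f$ induces a topological isomorphism $\ell/\ker f\cong k$ (surjectivity is automatic for a nonzero functional, and bicontinuity follows from the open mapping theorem over a complete nontrivially valued field), and the quotient of a spherically complete normed space by a closed subspace is again spherically complete; hence $k$ would be spherically complete, a contradiction. If you want to keep your hands-on pseudo-Cauchy approach, you must supply the missing convergence argument for the second branch; as written, the proof is incomplete.
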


\begin{proof}
The Proposition follows by setting $E$ to be $\ell$ in the statement of
\cite[Cor.\ 4.3]{vR78} or in the proof of \cite[Thm.\ 2]{vdPvT67}.
\end{proof}

\begin{remark}
For the curious reader who prefers a more explicit example of a Banach space over
a non-spherically complete non-Archimedean field $(k,\abs{})$ that admits no nonzero
continuous functionals, note that if
$\ell^\infty$ is the $k$-Banach
space of sequences of elements of $k$ with bounded norms and $c_0$ is the closed
subspace of null sequences, then $k$ is spherically complete if and only if 
$\ell^\infty/c_0$ admits a nonzero continuous linear functional 
\cite[Cor. 4.1.13]{PGS10}.
\end{remark}

\section{Frobenius splitting of some Tate algebras}\label{sec:refined-mainthm}
We begin by giving a topological characterization of Frobenius splitting
of Tate algebras over non-Archimedean fields of prime characteristic, which
is a stronger version of Theorem \hyperref[thm:refined-mainthmintro]{B}.
\begin{theorem}
\label{thm:refined-mainthm}
  Let $(k,\abs{})$ be a non-Archimedean field of
  characteristic $p > 0$.
  The following are equivalent:
  \begin{enumerate}[label=$(\roman*)$,ref=\roman*]
    \item\label{thm:tatefsplitnarb}
      $T_n(k)$ is Frobenius split for each integer $n > 0$.
     \item\label{thm:tatesolidarbn}
     $T_n(k)$ has a nonzero $p^{-1}$-linear map for each integer $n > 0$.
    \item\label{thm:tatefsolidsomen}
      There exists an integer $n > 0$ for which $T_n(k)$ has a nonzero $p^{-1}$-linear map.
    \item\label{thm:tatenontrivial}
      $T_1(k)$ has a nonzero $p^{-1}$-linear map.
    \item\label{thm:tatefsplitn1}
      $T_1(k)$ is Frobenius split.
    \item\label{thm:tatefsplitklinmap}
      There exists a nonzero continuous $k$-linear map $f\colon
      F_{k*}k \to k$.
  \end{enumerate}
\end{theorem}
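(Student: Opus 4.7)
The plan is to close the equivalence by a cycle in which most arrows are routine. Specifically, (i)$\Rightarrow$(ii), (i)$\Rightarrow$(v), and (v)$\Rightarrow$(iv) hold because a Frobenius splitting is a nonzero $p^{-1}$-linear map; (ii)$\Rightarrow$(iv), (iv)$\Rightarrow$(iii), and (ii)$\Rightarrow$(iii) follow by specializing $n$. The substantive implications are (vi)$\Rightarrow$(i) and (iii)$\Rightarrow$(vi).

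For (vi)$\Rightarrow$(i), I would first normalize the given continuous nonzero $\psi \colon k^{1/p} \to k$ to satisfy $\psi(1) = 1$ by replacing $\psi(x)$ with $\psi(c_0 x)/\psi(c_0)$ for some $c_0$ with $\psi(c_0) \ne 0$; the replacement remains continuous because multiplication by $c_0$ is a bounded $k$-linear map on $k^{1/p}$. Writing $Y_i \coloneqq X_i^{1/p}$ so that $T_n(k)^{1/p} = k^{1/p}\{Y_1,\dots,Y_n\}$, I would then define
\[
  \Phi_n\Bigl(\sum_{\nu \in \ZZ_{\ge 0}^n} a_\nu Y^\nu\Bigr) \coloneqq \sum_{\mu \in \ZZ_{\ge 0}^n} \psi(a_{p\mu})\, X^\mu.
\]
Boundedness of $\psi$ via Lemma \ref{lem:continuous} forces $\abs{\psi(a_{p\mu})} \to 0$, so the series converges in $T_n(k)$ and $\Phi_n$ is continuous. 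Linearity over $T_n(k)$ is checked on monomials $X^\alpha = Y^{p\alpha}$ via the index shift $\nu \mapsto \nu + p\alpha$, which preserves divisibility by $p$, and then extends to all of $T_n(k)$ by $k$-linearity and continuity. Since $\Phi_n(1) = \psi(1) = 1$, this is a Frobenius splitting.

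For (iii)$\Rightarrow$(vi), suppose $\phi \colon T_n(k)^{1/p} \to T_n(k)$ is nonzero and $T_n(k)$-linear. I would first use the natural $T_n(k)^{1/p}$-module structure on $\mathrm{Hom}_{T_n(k)}(T_n(k)^{1/p}, T_n(k))$ defined by $(z \cdot \phi)(x) \coloneqq \phi(zx)$: pick $z_0$ with $\phi(z_0) \ne 0$ and replace $\phi$ by $z_0 \cdot \phi$, so that $g \coloneqq \phi(1)$ is a nonzero element of $T_n(k)$. Using the Jacobson property of $T_n(k)$ (Theorem \ref{thm:tateprops}$(\ref{thm:tatejacobson})$) together with the infinitude of $k^\circ$ (Convention \ref{conv:nontrivial}), and inducting on $n$ with the Euclidean property of $T_1(k)$ (Theorem \ref{thm:tateprops}$(\ref{thm:T_1euclid})$) as the base case, I would produce a point $a \in (k^\circ)^n$---corresponding to the maximal ideal $(X_1 - a_1,\dots,X_n - a_n)$---at which $g(a) \ne 0$. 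The candidate functional is then
\[
  \psi\colon k^{1/p} \longrightarrow k, \qquad \psi(c) \coloneqq \phi(c)(a),
\]
which is $k$-linear with $\psi(1) = g(a) \ne 0$, hence nonzero.

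The main obstacle is the continuity of $\psi$. Since evaluation at $a$ is bounded by the Gauss norm on $T_n(k)$, continuity of $\psi$ reduces to boundedness of the restriction $\phi|_{k^{1/p}} \colon k^{1/p} \to T_n(k)$. No off-the-shelf automatic-continuity theorem applies, because $T_n(k)^{1/p}$ is not finitely generated over $T_n(k)$ once $[k^{1/p}:k]$ is infinite. I expect to extract the bound by exploiting the identification $T_n(k)^{1/p} = T_n(k)\,\widehat{\otimes}_k\, k^{1/p}$ together with the free decomposition $T_n(k)^{1/p} = \bigoplus_{0 \le \beta_i < p} Y^\beta\, k^{1/p}\{X_1,\dots,X_n\}$, analyzing $\phi$ one $Y^\beta$-component at a time; alternatively, a preliminary modification of $\phi$ before restricting may force the relevant restriction to be continuous. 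This continuity step is the technical heart of the theorem.
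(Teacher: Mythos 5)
Your cycle is logically complete, and your (vi)$\Rightarrow$(i) argument is essentially the paper's: normalize $\psi$ so that $\psi(1)=1$ and apply it coefficientwise to the exponents divisible by $p$, with Lemma \ref{lem:continuous} guaranteeing that the image is again a restricted power series. The setup of (iii)$\Rightarrow$(vi) is also reasonable in outline: arranging $g=\phi(1)\neq 0$, finding $a\in(k^\circ)^n$ with $g(a)\neq 0$, and composing with the Gauss-norm-bounded evaluation at $a$ plays the same role as the paper's reduction via Lemma \ref{lem:rigid-distinguished} and the quotient $T_n\twoheadrightarrow k\{X_n\}$, followed by the PID argument on $T_1$.

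However, the continuity of $\psi$ --- which you correctly identify as the technical heart --- is left unproved, and neither of your proposed strategies closes the gap. The decomposition $T_n(k)^{1/p}=\bigoplus_{0\le\beta_i<p}Y^\beta\,k^{1/p}\{X_1,\dots,X_n\}$ merely restates the problem: its $\beta=0$ component is a $T_n(k)$-linear map $k^{1/p}\{X_1,\dots,X_n\}\to T_n(k)$ whose restriction to constants is exactly the map whose continuity is in question, and when $[k^{1/p}:k]=\infty$ there is no automatic-continuity principle to invoke; a ``preliminary modification of $\phi$'' is not an argument. The paper's proof is a concrete contradiction, run after first reducing to $n=1$ and to an honest splitting $\Phi$: if the functional $c\mapsto(\text{constant term of }\Phi(c))$ were unbounded, pick a null sequence $(a_i)$ in $F_{k*}k$ with $\abs{f(a_i)}\ge i!$, write $\Phi(a_i)=\sum_j b_{i,j}X^j$, and choose indices $m_0<m_1<\cdots$ so that $\abs{b_{m_i,0}}$ strictly dominates $\max_{0\le r\le i-1}\abs{b_{m_r,i-r}}$. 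Then $\sum_r a_{m_r}X^{rp}$ lies in $T_1(k)$, and since $\Phi(a_{m_r}X^{rp})=X^r\Phi(a_{m_r})$, the coefficient of $X^i$ in its image is the finite sum $\sum_{r=0}^i b_{m_r,i-r}$, which by Lemma \ref{lem:max-norm} has absolute value $\abs{b_{m_i,0}}\ge m_i!$; hence the image is not a restricted power series, a contradiction. Without this (or an equivalent) construction your (iii)$\Rightarrow$(vi) is incomplete, and since every implication in your scheme that reaches (vi) routes through it, the equivalence is not established.
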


\begin{proof}
  $(\ref{thm:tatefsplitnarb}) \Rightarrow
  (\ref{thm:tatesolidarbn})$ is clear since Frobenius splittings
  are nonzero $p^{-1}$-linear maps. Similarly, so is $(\ref{thm:tatesolidarbn})
  \Rightarrow (\ref{thm:tatefsolidsomen})$.
  
  \par For $(\ref{thm:tatefsolidsomen})
  \Rightarrow (\ref{thm:tatenontrivial})$, we may assume $n > 1$. 
  Let $\Phi\colon F_{T_n*}T_n \to T_n$ be a nonzero $T_n$-linear map, and choose $a
  \in F_{T_n*}T_n$ such that $\Phi(a) \ne 0$. 
  By replacing $\Phi$ with $\Phi \circ F_{T_n*}(- \cdot a)$, we may assume that
  $\Phi$ is such that $\Phi(1) \ne 0$.
  We now modify $\Phi$ to create a $T_n$-linear map
  $\widetilde{\Phi}\colon F_{T_n*}T_n \to T_n$ such that $1$ maps to an element
  not contained in the ideal $(X_1,X_2,\ldots,X_{n-1})$.
  By Lemma \ref{lem:rigid-distinguished}, there exists an automorphism
  $\sigma\colon T_n \to T_n$ such that $g = \sigma(\Phi(1))$ is
  $X_n$-distinguished in the sense of Definition \ref{def:Weierstrass-dist}.
  In particular, $g \notin (X_1,X_2,\dots,X_{n-1})$ by Remark \ref{rem:units}$(\ref{terms})$.
  Now consider the composition
  \[
    \widetilde{\Phi}\colon
    F_{T_n*}T_n \xrightarrow{F_{T_n*}(\sigma^{-1})} F_{T_n*}T_n
    \overset{\Phi}{\longrightarrow} T_n \overset{\sigma}{\longrightarrow} T_n.
  \]
  We claim that $\widetilde{\Phi}$ is $T_n$-linear, or said differently,
	that  $\widetilde{\Phi}$
  defines a $p^{-1}$-linear map on $T_n$. We will exploit the fact that $\sigma$
  is a ring automorphism to see this.
  Let $h \in T_n$ and $f \in F_{T_n*}T_n$.
  We then have
  \begin{align*}
    \widetilde{\Phi}(h \cdot f) &= \widetilde{\Phi}(h^pf)\\
    &= (\sigma \circ \Phi) \bigl(\sigma^{-1}(h^pf)\bigr)\\
    &= (\sigma \circ \Phi) \bigl(\sigma^{-1}(h)^p\,\sigma^{-1}(f))\bigr)\\
    &= \sigma\bigl(\sigma^{-1}(h)\cdot\Phi\bigl(\sigma^{-1}(f)\bigr)\bigr)\\
    &= h\cdot\sigma\bigl(\Phi\bigl(\sigma^{-1}(f)\bigr)\bigr)\\
    &= h\cdot\widetilde{\Phi}(f)
  \end{align*}
  as desired.
  Since $\sigma^{-1}$ maps $1$ to $1$, we see that
  $\widetilde{\Phi}$ maps $1$ to the $X_n$-distinguished element $g$.
  Finally, consider the composition
  \[
    F_{k\{ X_n \}*}k\{ X_n \} \hooklongrightarrow
    F_{T_n*}T_n \overset{\widetilde{\Phi}}{\longrightarrow}
    T_n \overset{\pi}{\longtwoheadrightarrow} k\{ X_n \},
  \]
  where $\pi$ is the quotient map sending $X_1,X_2,\ldots,X_{n-1}$ to $0$.
  The composition defines a $p^{-1}$-linear map on $k\{ X_n \}$, and sends
  $1$ to a nonzero element in $k\{ X_n \}$ because 
  $g \notin (X_1,X_2,\ldots,X_{n-1}) = \ker(\pi)$.

  \par For $(\ref{thm:tatenontrivial}) \Rightarrow (\ref{thm:tatefsplitn1})$, we will 
  use the fact that $T_1$ is a Euclidean domain 
  (Theorem \ref{thm:tateprops}$(\ref{thm:T_1euclid})$), hence in particular a principal
  ideal domain (PID). Let $K \coloneqq \Frac(T_1)$ and let
  \[
  \Phi\colon F_{T_1*}T_1 \longrightarrow T_1
  \]
  be a nonzero $T_1$-linear map. Since $T_1$ is a PID, there exists a nonzero $a \in T_1$
  such that $\im(\Phi) = aT_1$. Then by construction, the composition
  \[
  F_{T_1*}T_1 \overset{\Phi}{\longrightarrow} T_1 \hooklongrightarrow K \xrightarrow{-\cdot a^{-1}} K
  \]
  is a $T_1$-linear map whose image is $T_1$. Therefore,
  restricting the codomain of this composition to $T_1$ gives us a surjective $T_1$-linear map
  $\widetilde{\Phi}\colon F_{T_1*}T_1 \twoheadrightarrow T_1$.
  Let $x \in F_{T_1*}T_1$ such that 
  $\widetilde{\Phi}(x) = 1$.
  Then, the composition
  \[
    F_{T_1*}T_1 \xrightarrow{F_{T_1*}(-\cdot x)} F_{T_1*}T_1
  \overset{\widetilde{\Phi}}{\longrightarrow} T_1
  \]
  maps $1 \in F_{T_1*}T_1$ to $1 \in T_1$, and hence $T_1$ is Frobenius split.
  
  \par We now show $(\ref{thm:tatefsplitklinmap}) \Rightarrow
  (\ref{thm:tatefsplitnarb})$.
  Let $c \in F_{k*}k$ be such that $f(c) = b \ne 0$.
  Then, the composition
  \[
    \phi\colon F_{k*}k
    \xrightarrow{F_{k*}(-\cdot c)} F_{k*}k 
    \overset{f}{\longrightarrow} k
    \xrightarrow{-\cdot b^{-1}} k
  \]
  is a continuous $k$-linear splitting of the Frobenius map
  $F_k\colon k \to F_{k*}k$.
  Now on $F_{T_n*}T_n$, we consider the map
  \[
    \begin{tikzcd}[column sep=1.475em,row sep=0]
      \mathllap{\Phi\colon} F_{T_n*}T_n \rar & T_n\\
      \displaystyle\sum_{\nu \in \ZZ_{\ge0}^n} a_\nu X^\nu 
      \rar[mapsto]
      & \displaystyle\sum_{\nu \in p \cdot \ZZ_{\ge0}^n} \phi(a_\nu)
      X^{\nu/p}
    \end{tikzcd}
  \]
  of $T_n$-modules, where $p \cdot \ZZ_{\ge0}^n$ denotes the submonoid of
  $\ZZ_{\ge0}^n$ where each coordinate is divisible by $p$, and for $\nu \in p
  \cdot \ZZ_{\ge0}^n$, the multi-index $\nu/p$ is obtained by dividing every
  coordinate by $p$.
  The map indeed defines a map to $T_n$ since if $\abs{a_\nu} \to 0$,
  then $\abs{\phi(a_\nu)} \to 0$ by continuity of $\phi$ (see Lemma \ref{lem:continuous}).
  Since $\Phi$ maps $1 \in F_{T_n*}T_n$ to $1 \in T_n$, we get a Frobenius
  splitting of $T_n$.
  
  \par It remains to show $(\ref{thm:tatefsplitn1}) \Rightarrow
  (\ref{thm:tatefsplitklinmap})$.
  Let $\Phi\colon F_{k\{ X \}*}k\{ X \} \to k\{ X
  \}$ be a Frobenius splitting of $T_1 = k\{ X \}$, and consider
  the composition
  \[
    f\colon F_{k*}k \hooklongrightarrow F_{k\{ X \}*}k\{ X \}
    \overset{\Phi}{\longrightarrow} k\{ X \} \longtwoheadrightarrow
    \frac{k\{ X \}}{(X)} \overset{\sim}{\longrightarrow} k.
  \]
  Note that $f$ is a nonzero $k$-linear map since $f$ maps $1 \in
  F_{k*}k$ to $1 \in k$. Assume for
contradiction that $f$ is not continuous, and choose by Lemma \ref{lem:continuous}
a sequence $(a_i)_{i \in \mathbf{Z}_{\geq 0}} \subseteq F_{k*}k$ such that
$\abs{a_i} \rightarrow 0$ as $i \rightarrow \infty$ and such that denoting
\[
\Phi(a_i) = \sum_{j=0}^\infty b_{i,j}X^j,
\]
we have
\[
  \abs{b_{i,0}} \geq i!
\]
for all $i$.\footnote{We assume more than we need here so that
our argument easily adapts to the setting of Theorem \ref{prop:local-analogue}.}
Note that $f(a_i) = b_{i,0}$.
Using this sequence $(a_i)_i$,
we construct a restricted power series in $k\{ X \}$ whose image
under $\Phi$ does not land in $k\{ X \}$.
Let $m_0 \coloneqq 0$, and for all $i \geq 1$, inductively 
choose $m_i \gg m_{i-1}$ such that
\[
\max_{0 \le r \le i-1}\Bigl\{\abs[\big]{b_{m_r,i-r}}\Bigr\} <
    \abs[\big]{b_{m_i,0}}.
\]
Note that such $m_i$ exist because $\abs{b_{i,0}} \rightarrow \infty$ as $i \rightarrow \infty$.
We then have
\begin{equation}\label{eq:howwechoosemi}
    \abs[\Bigg]{\sum_{r=0}^{i-1} b_{m_r,i-r}} \le
    \max_{0 \le r \le i-1}\Bigl\{\abs[\big]{b_{m_r,i-r}}\Bigr\} <
    \abs[\big]{b_{m_i,0}}
  \end{equation}
by the non-Archimedean triangle inequality.
Now consider the restricted power series 
\[\sum_{r=0}^\infty a_{m_r}X^{rp} \in k\{X\}.\]
Applying the Frobenius splitting $\Phi$ to this power series we see that for all $i \in \ZZ_{\ge 0}$, 
  \begin{align}
    \Phi\Biggl(\sum_{r=0}^\infty a_{m_r} X^{rp}\Biggr)
    &=\Phi\Biggl(\sum_{r=0}^i a_{m_r} X^{rp}\Biggr) + 
    \Phi\Biggl(\sum_{r=i+1}^\infty a_{m_r} X^{rp}\Biggr)\nonumber\\
    &= \sum_{r=0}^i\Biggl(X^r\sum_{j=0}^{\infty}b_{m_r,j}X^j\Biggr) + X^{i+1}
    \Phi\Biggl(\sum_{r=i+1}^\infty a_{m_r} X^{(r-i-1)p}\Biggr).
    \label{eq:imageofbadseries}
  \end{align}
 Note that the second term in \eqref{eq:imageofbadseries} is divisible by
  $X^{i+1}$.
  Thus, it does not contribute to the coefficient of $X^{i}$ in 
  $\Phi(\sum_{r=0}^\infty a_{m_r} X^{rp})$.
  For every $i \in
  \ZZ_{\ge 0}$, we then have
\begin{align*}
    \MoveEqLeft[3]\abs[\Bigg]{\text{coefficient of $X^{i}$ in
    $\Phi\Biggl(\sum_{r=0}^\infty a_{m_r} X^{rp}\Biggr)$}}\\
    &= \abs[\Bigg]{\sum_{r=0}^i b_{m_r,i-r}}
    = \abs[\Bigg]{b_{m_i,0} + \sum_{r=0}^{i-1} b_{m_r,i-r}} 
    =\abs[\big]{b_{m_i,0}} \ge {m_i}!,
  \end{align*}
where the penultimate equality follows by Lemma \ref{lem:max-norm}
and by \eqref{eq:howwechoosemi}.
Therefore $\Phi(\sum_{r=0}^\infty a_{m_r} X^{rp})$ cannot be a restricted power series,
contradicting the assumption that $\Phi$ maps into $T_1$. Thus, $f$ must be continuous.
\end{proof}

\par This topological characterization shows that Tate algebras are Frobenius split
in many commonly occurring cases.

\begin{customcor}{D}
\label{cor:F-split-usually}
  Let $(k,\abs{})$ be a complete non-Archimedean field of
  characteristic $p > 0$.
For each $n > 0$, the Tate algebra $T_n(k)$ is Frobenius split in the
following cases:
\begin{enumerate}[label=$(\roman*)$,ref=\roman*]
	\item\label{cor:F-split-usually-spher} $(k,\abs{})$ is spherically complete.
	\item\label{cor:F-split-usually-count} $k^{1/p}$ has a dense $k$-subspace $V$ that has a countable $k$-basis, hence in particular
	if $[k^{1/p}:k] < \infty$.
	\item\label{cor:F-split-polar} $\abs{k^\times}$ is not discrete, and the norm on $k^{1/p}$ is polar.
\end{enumerate}
\end{customcor}
We note that $(\ref{cor:F-split-usually-spher})$ and
$(\ref{cor:F-split-usually-count})$ arose out of conversations with Eric Canton
and Matthew Stevenson.
\begin{proof}
In each of the above cases, Theorem \ref{thm:Hahn-Banach} implies
that the identity map $\id_k\colon k \rightarrow k$ can be extended
to a continuous $k$-linear map $k^{1/p} \rightarrow k$. We are then
done by Theorem \ref{thm:refined-mainthm}.
\end{proof}

\section{A local construction}\label{sec:local-construction}
We now show how the topological characterization
of Frobenius splittings of Tate algebras can be extended to a similar local
construction involving convergent power series rings.
This will in turn yield local examples of excellent regular
rings that are not Frobenius split in Section \ref{sec:troublesome-field}.

\begin{definition}[{see \cite[pp.\ 190--191]{Nag62}}]
\label{def:convergent-series}
  Let $(k,\abs{})$ be a non-Archimedean field.
  For every positive integer $n > 0$, the \textsl{convergent power series ring}
  in $n$ indeterminates over $k$ is the $k$-subalgebra
  \[
    K_n(k) \coloneqq k\langle X_1,X_2,\ldots,X_n \rangle
    \coloneqq
    \Set[\Bigg]{\sum_{\nu \in \ZZ_{\ge0}^n} a_\nu X^{\nu} \given
      \begin{array}{@{}c@{}}
        a_\nu \in k\ \text{and there exist}\ r_1,r_2,\ldots,r_n \in
        \RR_{>0}\ \text{and}\ M \in \RR_{>0}\\
        \text{such that}\ \abs{a_\nu}\,r^{\nu_1}_1\ldots r^{\nu_n}_n \leq M\
        \text{for all}\ \nu \in \ZZ_{\ge0}^n
      \end{array}}
  \]
  of the formal power
  series $k\llbracket X_1,X_2,\ldots,X_n \rrbracket$ in $n$ indeterminates over $k$.\footnote{Following \cite{GR71},
  the letter $K$ is used instead of the letter $C$ because the German word for
  ``convergent'' is ``konvergent.''}
  For $n = 1$, we will denote the indeterminate in $K_1(k)$ by just $X$ instead
  of $X_1$.
\end{definition}

\begin{remark}
\label{rem:exp}
It is clear that $T_n(k)$ is a subring of $K_n(k)$ for each $n > 0$. For $n=1$,
an element of $K_1(k)$ is a power series $\sum_i a_i X^i$ for which there exists
real numbers $r, M > 0$ such that $\abs{a_i} \leq Mr^{-i}$ for all $i \in \ZZ_{\geq 0}$.
We can always assume $0 < r < 1$, and so, $\sum_i a_i X^i$ is convergent
precisely when the norms of its coefficients can be bounded by some exponential function.
\end{remark}

\par The main properties of $K_n(k)$ are summarized below.

\begin{theorem}
\label{prop:properties-K_n}
  Let $(k,\abs{})$ be a non-Archimedean field, and let $n$ be a
  positive integer.
  Then, the convergent power series ring $K_n(k)$ satisfies the following
  properties:
\begin{enumerate}[label=$(\roman*)$,ref=\roman*]
  \item\label{conv-local} $K_n(k)$ is a Noetherian local ring of Krull dimension
    $n$ whose maximal ideal is $(X_1,X_2,\dots,X_n)$.
  \item\label{conv-regular} $K_n(k)$ is regular.
  \item\label{conv-Hens} $K_n(k)$ is Henselian.
	\item\label{conv-exc} $K_n(k)$ is excellent.
\end{enumerate}
\end{theorem}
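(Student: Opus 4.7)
My plan is to mimic, step by step, the proof of Theorem~\ref{thm:tateprops} for the Tate algebra $T_n(k)$. The crucial technical input is a Weierstrass division and preparation theorem for $K_n(k)$, which I would establish by exhibiting $K_n(k)$ as the filtered colimit, over vectors of radii $(r_1,\ldots,r_n)$ with $r_i \to 0$, of the subalgebras $A_r \subseteq K_n(k)$ consisting of power series satisfying $\abs{a_\nu}\,r^\nu \leq M$ for some $M > 0$. Each such $A_r$ becomes, after rescaling (and possibly enlarging $k$ to contain the $r_i$'s in its value group, which does not affect the abstract ring $K_n(k)$), isomorphic to a Tate algebra, from which the Weierstrass theorems and the distinguishing automorphism of Lemma~\ref{lem:rigid-distinguished} transfer to $K_n(k)$. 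These foundations are carried out in detail in, for instance, \cite[Ch.~II]{GR71} or \cite[\S44]{Nag62}.

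Granted Weierstrass preparation, assertion (i) follows in a routine manner. For the local structure, if $f = \sum a_\nu X^\nu \in K_n(k)$ has $a_0 \ne 0$, the unique formal inverse of $f$ in $k\llbracket X_1,\ldots,X_n\rrbracket$ admits a polydisc of convergence (produced directly from the growth estimate on $f$), hence lies in $K_n(k)$; therefore the non-units are exactly $(X_1,\ldots,X_n)$, which is the maximal ideal, with residue field $k$. Noetherianness is proved by induction on $n$ by first applying the distinguishing automorphism to a chosen nonzero element $g$ of an ideal $I$, then factoring $g$ via Weierstrass preparation as a unit times a Weierstrass polynomial in $X_n$ over $K_{n-1}(k)$, and reducing $I$-finiteness to a question about finitely generated $K_{n-1}(k)$-modules. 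The Krull dimension equals $n$ since the chain $(0) \subsetneq (X_1) \subsetneq \cdots \subsetneq (X_1,\ldots,X_n)$ forces $\dim \geq n$, while the maximal ideal being generated by $n$ elements forces $\dim \leq n$. Assertion (ii) is then immediate, since a Noetherian local ring of dimension $n$ with an $n$-generated maximal ideal is regular. For (iii), Henselianness, I would deduce it from Weierstrass preparation via a Newton-type iteration (or equivalently a Weierstrass factorization in $K_{n+1}(k)$ after translating a simple root of the reduction to the origin and rescaling in $T$), exactly as in \cite[(44.1)]{Nag62}; the convergence of the iteration is controlled by the same polydisc estimates that define $K_n(k)$.

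The hardest assertion by far, and the main obstacle, is (iv), excellence. I do not see a way to derive it from (i)--(iii) without a substantial amount of additional work on the structure of the module of continuous K\"ahler differentials and on the formal fibers of the completion map $K_n(k) \hookrightarrow k\llbracket X_1,\ldots,X_n\rrbracket$. The cleanest resolution is to invoke Kiehl's theorem \cite[Thm.~3.3]{Kie69}, which establishes excellence simultaneously for the Tate algebras $T_n(k)$ and the convergent power series rings $K_n(k)$ via a Jacobian-type criterion showing that these formal fibers are geometrically regular.
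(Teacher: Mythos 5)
Your proposal is correct and takes essentially the same route as the paper: the paper's proof simply cites \cite[Thm.\ 45.5]{Nag62} for $(\ref{conv-local})$--$(\ref{conv-Hens})$ (whose proofs there are precisely the Weierstrass-division arguments you sketch) and, exactly as you do, invokes Kiehl \cite[Thm.\ 3.3]{Kie69} (see also \cite[Thm.\ 1.1.3]{Con99}) for excellence. The one point worth tightening is that your subalgebras $A_r$ of series \emph{bounded} on the polydisc of radius $r$ are not literally rescaled Tate algebras (which require the terms to tend to $0$); the colimit identification still holds by Abel's lemma, since boundedness at radius $r$ gives convergence at every smaller radius in the value group, as the paper notes in a footnote to Theorem \ref{prop:local-analogue}.
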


\begin{proof}[Indication of proof]
$(\ref{conv-local})$--$(\ref{conv-Hens})$ are proved in 
\cite[Thm.\ 45.5]{Nag62}.
For $(\ref{conv-exc})$, in the proof that $T_n(k)$ is excellent, Kiehl observes
that one can adapt the proof for $T_n(k)$ to show that $K_n(k)$ is also
excellent \cite[p.\ 89]{Kie69} (see also \cite[Thm.\ 1.1.3]{Con99}).
\end{proof}

\par With these preliminaries, one can now adapt the proof of Theorem \ref{thm:refined-mainthm}
to obtain an analogous set of results for the local ring $K_n(k)$.

\begin{theorem}
\label{prop:local-analogue}
Let $(k,\abs{})$ be a non-Archimedean field of
  characteristic $p > 0$.
  The following are equivalent:
  \begin{enumerate}[label=$(\roman*)$,ref=\roman*]
    \item\label{thm:convfsplitnarb}
      $K_n(k)$ is Frobenius split for each integer $n > 0$.
    \item\label{thm:convfsolidalln}
      $K_n(k)$ has a nonzero $p^{-1}$-linear map for each integer $n > 0$.
    \item\label{thm:convfsolidsomen}
    There exists an integer $n > 0$ for which $K_n(k)$ has a nonzero $p^{-1}$-linear map.
    \item\label{thm:convnontrivial}
      $K_1(k)$ has a nonzero $p^{-1}$-linear map.
    \item\label{thm:convfsplitn1}
      $K_1(k)$ is Frobenius split.
    \item\label{thm:convfsplitklinmap}
      There exists a nonzero continuous $k$-linear map $f\colon
      F_{k*}k \to k$.
  \end{enumerate}
\end{theorem}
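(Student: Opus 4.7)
The plan is to follow the structure of the proof of Theorem \ref{thm:refined-mainthm} essentially verbatim, replacing the Tate algebra $T_n(k)$ with the convergent power series ring $K_n(k)$ throughout. The implications $(\ref{thm:convfsplitnarb}) \Rightarrow (\ref{thm:convfsolidalln}) \Rightarrow (\ref{thm:convfsolidsomen})$ are trivial, and $(\ref{thm:convfsolidsomen}) \Rightarrow (\ref{thm:convnontrivial})$ goes through provided we have a local Weierstrass-style analogue of Lemma \ref{lem:rigid-distinguished}: given a nonzero $g \in K_n(k)$, there is a $k$-algebra automorphism of $K_n(k)$ of the form $X_i \mapsto X_i + X_n^{\alpha_i}$ ($i < n$), $X_n \mapsto X_n$, making $g$ $X_n$-distinguished. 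This is a standard Weierstrass preparation result for convergent series (see, e.g., \cite[Chap.\ V]{Nag62}); note that such substitutions preserve $K_n(k)$ because $X_n^{\alpha_i}$ lies in the maximal ideal. The rest of the argument is the same: after conjugating $\Phi$ by this automorphism to get $\widetilde{\Phi}(1) \notin (X_1,\dots,X_{n-1})$, we quotient by $(X_1,\dots,X_{n-1})$ to obtain a nonzero $p^{-1}$-linear map on $K_1(k)$.

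For $(\ref{thm:convnontrivial}) \Rightarrow (\ref{thm:convfsplitn1})$, we use that $K_1(k)$ is a regular local ring of dimension $1$ by Theorem \ref{prop:properties-K_n}$(\ref{conv-local})$--$(\ref{conv-regular})$, hence a discrete valuation ring, so in particular a PID. This substitutes for the Euclidean property of $T_1(k)$ used in the Tate case, and the argument (principal image, division in the fraction field, splitting by pre-composition with multiplication) goes through unchanged. For $(\ref{thm:convfsplitklinmap}) \Rightarrow (\ref{thm:convfsplitnarb})$, we define the same map $\sum a_\nu X^\nu \mapsto \sum_{\nu \in p\cdot\ZZ_{\ge 0}^n} \phi(a_\nu) X^{\nu/p}$. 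The only thing to check is that convergence is preserved: if $\abs{a_\nu} r_1^{\nu_1}\cdots r_n^{\nu_n} \le M$ and $\abs{\phi(b)} \le B\abs{b}$ for all $b$ (by Lemma \ref{lem:continuous}), then $\abs{\phi(a_{p\mu})} (r_1^p)^{\mu_1}\cdots(r_n^p)^{\mu_n} \le BM$, so the image lies in $K_n(k)$ with convergence radii $r_i^p > 0$.

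The main obstacle, as in the Tate case, is $(\ref{thm:convfsplitn1}) \Rightarrow (\ref{thm:convfsplitklinmap})$. The candidate map $f \colon F_{k*}k \hookrightarrow F_{K_1*}K_1 \xrightarrow{\Phi} K_1 \twoheadrightarrow K_1/(X) \cong k$ is constructed identically; the issue is proving continuity. This is where the factorial bound in the original proof, which the authors flag via the footnote as being chosen precisely for this adaptation, becomes essential. Assuming $f$ is discontinuous, we extract a null sequence $(a_i)$ with $\abs{b_{i,0}} \ge i!$, where $\Phi(a_i) = \sum_j b_{i,j} X^j$. We then inductively choose $m_i$ as in the original proof satisfying \eqref{eq:howwechoosemi}; by further refining the choice of $m_i$ we can additionally ensure that $\abs{a_{m_r}}$ decays fast enough (for instance, $\abs{a_{m_r}} \le \rho^{rp}$ for some fixed $\rho < 1$) that the series $\sum_{r=0}^\infty a_{m_r} X^{rp}$ genuinely lies in $K_1(k)$, not merely in $T_1(k)$. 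This is possible because $\abs{a_i} \to 0$, so we have room to take $m_r$ as large as we wish.

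Having arranged this, the same computation as in \eqref{eq:imageofbadseries} together with Lemma \ref{lem:max-norm} shows that the coefficient of $X^i$ in $\Phi(\sum a_{m_r} X^{rp})$ has absolute value at least $m_i!$, which grows faster than any exponential in $i$. By Remark \ref{rem:exp}, no element of $K_1(k)$ can have coefficients that grow super-exponentially, so $\Phi(\sum a_{m_r} X^{rp}) \notin K_1(k)$, contradicting that $\Phi$ takes values in $K_1$. Hence $f$ is continuous, completing the cycle of implications.
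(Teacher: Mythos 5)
Your proposal is correct and follows essentially the same route as the paper: the same reduction to $n=1$ via a local Weierstrass-distinguishing automorphism (the paper cites Grauert--Remmert rather than Nagata), the same PID argument for $K_1$, the same convergence check for the splitting induced by a continuous functional, and the same factorial-versus-exponential growth contradiction for continuity. Your extra refinement of the $m_i$ to force exponential decay of $\abs{a_{m_r}}$ is harmless but unnecessary, since $\abs{a_{m_r}} \to 0$ already makes $\sum_r a_{m_r}X^{rp}$ a restricted, hence convergent, power series.
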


\begin{proof}
$(\ref{thm:convfsplitnarb})\Rightarrow(\ref{thm:convfsolidalln})$ 
and $(\ref{thm:convfsolidalln})\Rightarrow
(\ref{thm:convfsolidsomen})$ are clear.

\par For the proof of $(\ref{thm:convfsolidsomen}) \Rightarrow
(\ref{thm:convnontrivial})$, we adapt the proof in Theorem
\ref{thm:refined-mainthm} as follows.
Instead of Lemma \ref{lem:rigid-distinguished}, one uses \cite[Kap.\ I,
Folgerung zu Satz 4.3]{GR71} to find an automorphism $\sigma\colon K_n \to K_n$
such that $\sigma(\Phi(1))$ is $X_n$-distinguished in the sense of \cite[Kap.\
I, \S4.1]{GR71}.\footnote{While the definition of $K_n$ in \cite[Kap.\ I,
\S3.1]{GR71} differs from that in Definition \ref{def:convergent-series}, they
are equivalent by Abel's lemma for convergence of power series \cite[Kap.\ I,
Satz 1.2]{GR71}.}
The definition of $X_n$-distinguished still implies that $g \notin
(X_1,X_2,\ldots,X_{n-1})$, and the rest of the proof is the same as in Theorem
\ref{thm:refined-mainthm}.

\par The proof of $(\ref{thm:convnontrivial})\Rightarrow
(\ref{thm:convfsplitn1})$ follows by adapting the corresponding proof
in Theorem \ref{thm:refined-mainthm} using
the observation that $K_1$, being a regular local ring of dimension $1$,
is a PID (Theorem \ref{prop:properties-K_n}).

\par For $(\ref{thm:convfsplitklinmap})\Rightarrow(\ref{thm:convfsplitnarb})$,
we may assume without loss of generality that $f$ maps $1 \in F_{k*}k$
to $1 \in k$ as in Theorem
\ref{thm:refined-mainthm}. By Lemma
\ref{lem:continuous}$(\ref{lem:continuousconstb})$, choose a positive real 
number $B$ such that
for all $x \in F_{k*}k$, we have $\abs{f(x)} \leq B \cdot \abs{x}$. 
Now on $F_{K_n*}K_n$, we consider the map 
  \[
    \begin{tikzcd}[column sep=1.475em,row sep=0]
      \mathllap{\Phi\colon} F_{K_n*}K_n \rar & K_n\\
      \displaystyle\sum_{\nu \in \ZZ_{\ge0}^n} a_\nu X^\nu 
      \rar[mapsto]
      & \displaystyle\sum_{\nu \in p \cdot \ZZ_{\ge0}^n} f(a_\nu)
      X^{\nu/p},
    \end{tikzcd}
  \]
  of $K_n$-modules,
  which we claim indeed maps to $K_n$. This is because if $r_1,r_2,\ldots,r_n$
  and $M$
are positive real numbers such that for every $\nu \in \mathbf{Z}^n_{\geq 0}$ we 
have $\abs{a_\nu}r^{\nu_1}_1\cdots r^{\nu_n}_n \leq M$, then for every
$\nu \in p\cdot \mathbf{Z}^n_{\geq 0}$, we have
\[
  \abs{f(a_\nu)}\,(r_1^p)^{\nu_1/p}(r_2^p)^{\nu_2/p}\cdots (r_n^p)^{\nu_n/p} 
  \leq B\cdot\abs{a_\nu}\,r_1^{\nu_1}r_2^{\nu_2}\cdots r_n^{\nu_n} \leq B\cdot M.
\]
Said differently, the defining condition of a convergent power series
can be checked for $\sum_{\nu \in p \cdot \ZZ_{\ge0}^n} f(a_\nu)
      X^{\nu/p}$
upon replacing $r_1,r_2,\dots,r_n$
by $r^p_1,r^p_2,\dots,r^p_n$ and $M$ by $B \cdot M$.
Since $\Phi$ maps $1 \in F_{K_n*}K_n$ to $1 \in K_n$, we get
a Frobenius splitting of $K_n$. 

The proof of $(\ref{thm:convfsplitn1})\Rightarrow(\ref{thm:convfsplitklinmap})$
follows from the proof of the same implication in Theorem \ref{thm:refined-mainthm}
by replacing $T_1$ by $K_1$ and the phrase ``restricted power series'' by the phrase
``convergent power series'' everywhere. This is because in Theorem \ref{thm:refined-mainthm},
assuming that $F_{k*}k$ admits no nonzero continuous functionals $F_{k*}k \rightarrow k$, 
we construct
a restricted (hence convergent) power series whose image under the Frobenius splitting
is a power series whose coefficients have norms growing factorially. Therefore, this image
cannot be a convergent power series using Remark \ref{rem:exp}.
\end{proof}

\begin{remark}
Let $\fm$ be the maximal ideal of $T_n(k)$ generated by the indeterminates.
Then, one can show the following are equivalent:
\begin{enumerate}[label=$(\roman*)$,ref=\roman*]
  \item\label{rem:tatelocfsplit}
    $(T_n(k))_\fm$ is Frobenius split (resp.\ has a nonzero $p^{-1}$-linear map)
    for each integer $n > 0$.
  \item\label{rem:tatehenselfsplit}
    $(T_n(k))_\fm^h$ is Frobenius split (resp.\ has a nonzero $p^{-1}$-linear
    map) for each integer $n > 0$.
  \item\label{rem:fsplitklinmap}
    There exists a nonzero continuous $k$-linear map $f\colon
    F_{k*}k \to k$.
\end{enumerate}
$(\ref{rem:fsplitklinmap}) \Rightarrow (\ref{rem:tatelocfsplit})$ follows from
Theorem \ref{thm:refined-mainthm} and localization, and
$(\ref{rem:tatelocfsplit}) \Rightarrow (\ref{rem:tatehenselfsplit})$ follows by
base extension since the relative Frobenius of the map from a local
ring to its Henselization is an isomorphism. The latter assertion follows by
\cite[\href{https://stacks.math.columbia.edu/tag/097N}{Tag 097N} and
\href{https://stacks.math.columbia.edu/tag/0F6W}{Tag 0F6W}]{stacks-project}
because the Henselization of a local ring $(R, \fm)$ is a filtered colimit of 
\'etale $R$-algebras.
The implication $(\ref{rem:tatehenselfsplit}) \Rightarrow
(\ref{rem:fsplitklinmap})$ follows from the proof of
$(\ref{thm:tatefsplitn1})\Rightarrow(\ref{thm:tatefsplitklinmap})$ in Theorem
\ref{thm:refined-mainthm}.
This is because we have $k$-algebra inclusions
\[
  T_n \subseteq (T_n)_\fm \subseteq
  (T_n)_\fm^h \subseteq K_n
\]
by the
universal property of localization applied to $T_n \hookrightarrow K_n$ and
the Henselian property of $K_n$ (see Theorem
\ref{prop:properties-K_n}$(\ref{conv-Hens})$), and also because of the
fact that for a Frobenius splitting 
$\Phi: F_{(T_n)_\fm^h*} (T_n)_\fm^h \rightarrow (T_n)_\fm^h$, if 
the composition
\[
F_{k*}k \hooklongrightarrow F_{(T_n)_\fm^h*} (T_n)_\fm^h  \overset{\Phi}{\longrightarrow} (T_n)_\fm^h \longtwoheadrightarrow \frac{(T_n)_\fm^h}{\fm (T_n)_\fm^h} \overset{\sim}{\longrightarrow} k
\]
is not continuous, then
we can construct a restricted power series (i.e. an element of $T_n$) whose image under $\Phi$ 
does not even land
in $K_n$.
\end{remark}

\par We also obtain an analogue of Corollary \ref{cor:F-split-usually} for $K_n$.

\begin{corollary}
\label{cor:Kn-F-split-usually}
  Let $(k,\abs{})$ be a non-Archimedean field of
  characteristic $p > 0$.
For every $n > 0$, the convergent power series ring $K_n(k)$ is Frobenius split in the
following cases:
\begin{enumerate}[label=$(\roman*)$,ref=\roman*]
	\item $(k,\abs{})$ is spherically complete.
	\item $k^{1/p}$ has a dense $k$-subspace $V$ 
	which has a countable $k$-basis, hence in particular
	if $[k^{1/p}:k] < \infty$.
	\item $\abs{k^\times}$ is not discrete,
	and the norm on $k^{1/p}$ is polar.
\end{enumerate}
\end{corollary}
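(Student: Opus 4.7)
The plan is to mirror the proof of Corollary \ref{cor:F-split-usually} essentially verbatim, replacing only the appeal to Theorem \ref{thm:refined-mainthm} with an appeal to its local analogue, Theorem \ref{prop:local-analogue}. The three hypotheses $(i)$--$(iii)$ are conditions purely on the non-Archimedean field $(k,\abs{})$ and on the normed $k$-vector space $k^{1/p}$ (with its canonical norm from Remark \ref{rem:extending-norms}$(\ref{alg-ext-norm})$); the only change from Corollary \ref{cor:F-split-usually} is on the algebraic side, where the Tate algebra $T_n(k)$ is replaced by the convergent power series ring $K_n(k)$. Since Theorem \ref{prop:local-analogue} provides exactly the same topological criterion for Frobenius splitting of $K_n(k)$ that Theorem \ref{thm:refined-mainthm} provides for $T_n(k)$, the same input suffices.

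More explicitly, the goal in each of the three cases is to produce a nonzero continuous $k$-linear map $f\colon k^{1/p}\to k$; under the identification $F_{k*}k \cong k^{1/p}$ (available since $k$ is a domain), this is exactly a nonzero continuous $k$-linear map $F_{k*}k\to k$, which is condition $(\ref{thm:convfsplitklinmap})$ of Theorem \ref{prop:local-analogue}. In case $(i)$, since $(k,\abs{})$ is spherically complete, Theorem \ref{thm:Hahn-Banach}$(\ref{thm:Hahn-Banachsph})$ grants the Hahn--Banach extension property for the normed $k$-space $k^{1/p}$, so the identity map $\id_k\colon k \to k$ on the subspace $k \subseteq k^{1/p}$ extends to a norm-bounded, hence continuous, linear functional $k^{1/p}\to k$. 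In case $(ii)$, Theorem \ref{thm:Hahn-Banach}$(\ref{thm:Hahn-Banachcountable})$ provides the corresponding $(1+\epsilon)$-extension of $\id_k$; in case $(iii)$, the polarity hypothesis together with Theorem \ref{thm:Hahn-Banach}$(\ref{thm:Hahn-Banachpolar})$ applied to the finite-dimensional subspace $k \subseteq k^{1/p}$ yields the same conclusion. In each case the resulting extension is nonzero, because it restricts to $\id_k$ on $k$.

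Having produced a nonzero continuous $k$-linear map $F_{k*}k \to k$, the implication $(\ref{thm:convfsplitklinmap})\Rightarrow(\ref{thm:convfsplitnarb})$ of Theorem \ref{prop:local-analogue} delivers a Frobenius splitting of $K_n(k)$ for every $n > 0$, completing the proof. There is no genuine obstacle here: once Theorem \ref{prop:local-analogue} is in hand, the argument is a direct transcription of the proof of Corollary \ref{cor:F-split-usually}, and the only sanity check needed is that the canonical norm on $k^{1/p}$ extending $\abs{}$ is indeed the one relevant to the Hahn--Banach statements in Theorem \ref{thm:Hahn-Banach}, which is immediate from Remark \ref{rem:extending-norms}$(\ref{alg-ext-norm})$.
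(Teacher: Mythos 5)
Your proposal is correct and is essentially identical to the paper's proof: the paper likewise extends $\id_k$ to a continuous $k$-linear functional $k^{1/p}\to k$ via Theorem \ref{thm:Hahn-Banach} in each of the three cases and then invokes Theorem \ref{prop:local-analogue}. The additional details you supply (nonvanishing of the extension because it restricts to $\id_k$, and the applicability of the polar case to the one-dimensional subspace $k \subseteq k^{1/p}$) are accurate.
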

\begin{proof}
Lifting $\id_k\colon k \rightarrow k$ to a 
continuous $k$-linear map $k^{1/p} \rightarrow k$ using Theorem \ref{thm:Hahn-Banach}, 
the Corollary then follows
by Theorem \ref{prop:local-analogue}.
\end{proof}

\section{Gabber's example of a non-Archimedean field\texorpdfstring{\\}{ }with
no nonzero 
continuous \texorpdfstring{$p^{-1}$}{p\textasciicircum-1}-linear maps}
\label{sec:troublesome-field}
Following ideas of Ofer Gabber, we now construct 
a non-Archimedean field $(k,\abs{})$ of prime characteristic $p
> 0$ such that $F_{k*}k$ has no nonzero continuous $k$-linear functionals.
The possibility of the existence of such examples
is suggested by Gerritzen \cite{Ger67} and Kiehl \cite{Kie69} (see also \cite[p.\ 63]{BGR84}),
and while similar constructions of valuative fields with infinite
$p$-degree had been studied by Blaszczok and Kuhlmann \cite{BK15},
the connection with the existence of continuous functionals was not made
(see Remark \ref{rem:BKfields}).

\par Recall from the Notation subsection in Section \ref{sect:intro} that we
can identify the Frobenius map $k \to F_{k*}k$ for $k$ as above
with the inclusion $k
\hookrightarrow k^{1/p}$ of $k$ into the field $k^{1/p}$ of $p$-th roots
of elements in $k$.
The field $k^{1/p}$
has a unique norm extending the one on $k$ (see Remark
\ref{rem:extending-norms}$(\ref{alg-ext-norm})$), and 
we will continue denoting this norm on $k^{1/p}$ by $\abs{}$.
In this new notation, we want
to show that there exists a non-Archimedean field $k$ with no nonzero continuous 
functionals $k^{1/p} \rightarrow k$.

The idea behind the example is to create a non-Archimedean field $(k,\abs{})$
 which is not spherically complete, but such that $k$ admits
an extension $k \hookrightarrow \ell \hookrightarrow k^{1/p}$ where $\ell$
is spherically complete under the restriction of the norm on $k^{1/p}$ to $\ell$. One 
then gets the desired result by applying Proposition \ref{prop:trivial-dual-space}.

\par We first review some standard constructions of fields associated to an \emph{additive} value
group $\Gamma$.

\begin{definition}[see {\citeleft\citen{Poo93}\citemid \S3\citepunct
  \citen{Efr06}\citemid \S\S2.8--2.9\citeright}]
  Let $K$ be a field, and let $\Gamma \subseteq \RR$ be an additive subgroup.
  The \textsl{field of generalized rational functions} is the
  field of fractions $K(t^\Gamma)$ of the ring of generalized polynomials
  \[
    K[t^\Gamma] \coloneqq \Set[\Bigg]{\sum_{\gamma \in \Gamma} a_\gamma t^\gamma
    \given \text{the set}\ \Set{\gamma \in \Gamma \given a_\gamma \ne 0}\
    \text{is finite}}.
  \]
  The field $K(t^\Gamma)$ embeds inside the \textsl{Hahn series field}
  \[
    K((t^\Gamma)) \coloneqq \Set[\Bigg]{\sum_{\gamma \in \Gamma} a_\gamma
    t^\gamma \given \text{the set}\ \Set{\gamma \in \Gamma \given a_\gamma \ne
    0}\ \text{is well-ordered}}.
  \]
  These fields are compatibly valued with the analogue of the Gauss norm\footnote{Note 
  that even though $\Gamma$ is written additively, the norm
  on $K((t^\Gamma))$ is a multiplicative valuation.}:
  \[
    \norm[\Bigg]{\sum_{\gamma \in \Gamma} a_\gamma t^{\gamma}} \coloneqq
    \max_{a_\gamma \ne 0}\bigl\{e^{-\gamma}\bigr\}.
  \]
  The value groups of $K(t^\Gamma)$ and $K((t^\Gamma))$ are clearly
  $e^{-\Gamma}$,
  and their residue fields are both $K$.
  
  The Hahn series field $K((t^\Gamma))$ is spherically complete (hence also complete)
  with respect to the analogue of the Gauss norm \cite[Thm.\ 1]{Poo93}.
  It is also called a Mal\cprime cev--Neumann field (see \cite[p.\ 88]{Poo93}).
\end{definition}

\begin{theorem}
\label{thm:main-example}
  Let $K$ be a field of characteristic $p > 0$, and let $\Gamma \subseteq \RR$
  be an additive subgroup such that $\Gamma/p\Gamma$ is infinite.
  Consider the compositum
  \[
    M \coloneqq K((t^{p\Gamma})) \mathbin{.} K(t^\Gamma) \subseteq
    K((t^\Gamma))
  \]
  of fields over $K(t^{p\Gamma})$ with norm induced by that on $K((t^\Gamma))$.
  Then, the following properties hold:
  \begin{enumerate}[label=$(\roman*)$,ref=\roman*]
    \item\label{thm:main-example-boundedcoset} There is a bounded sequence 
    $(r_i)_{i \in \ZZ_{\geq 0}}$
    of pairwise distinct coset representatives of $p\Gamma$ in $\Gamma$.
    \item\label{thm:main-example-algext} $M$ is the algebraic extension of
      $K((t^{p\Gamma}))$ consisting of Hahn series in $K((t^\Gamma))$
      whose exponents lie in finitely many cosets of $p\Gamma$ in $\Gamma$.
    \item\label{thm:main-example-notsphercomp} The completion $\widehat{M}$ of
      $M$
      is not spherically complete.
    \item\label{thm:main-example-nop-1maps} There are no nonzero continuous
      $\widehat{M}$-linear functionals $\widehat{M}^{1/p} \to \widehat{M}$.
  \end{enumerate}
\end{theorem}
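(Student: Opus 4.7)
My plan is to prove the four parts in order, systematically exploiting the characteristic-$p$ identity $\bigl(\sum_\gamma a_\gamma t^\gamma\bigr)^p = \sum_\gamma a_\gamma^p\, t^{p\gamma}$ and the description of $M$ as a module over $L \coloneqq K((t^{p\Gamma}))$. For $(\ref{thm:main-example-boundedcoset})$, every additive subgroup of $\RR$ is either cyclic or dense; a cyclic $\Gamma$ would make $\Gamma/p\Gamma$ finite, so our hypothesis forces $\Gamma$ (and hence $p\Gamma$) to be dense in $\RR$. Thus every coset of $p\Gamma$ in $\Gamma$ meets $[-1,1]$, and picking one representative per coset from infinitely many distinct cosets gives the desired bounded sequence. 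For $(\ref{thm:main-example-algext})$, the Frobenius identity shows that $N \coloneqq L[t^\Gamma]$ satisfies $x^p \in L$ for every $x \in N$, so $N$ is purely inseparable over $L$; being a domain integral over a field, $N$ is itself a field, and hence contains $M = L \cdot K(t^\Gamma)$, with the reverse inclusion immediate. Fixing a set $R$ of coset representatives of $p\Gamma$ in $\Gamma$ gives $N = \bigoplus_{r \in R} L\cdot t^r$, which is exactly the set of Hahn series with support in finitely many cosets.

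For $(\ref{thm:main-example-notsphercomp})$, I extract from $(\ref{thm:main-example-boundedcoset})$ a strictly increasing subsequence $r_1 < r_2 < \cdots$ with limit $r_\infty \in \RR$ and set $x_n \coloneqq \sum_{i=1}^n t^{r_i} \in M$. Then $\norm{x_{n+1} - x_n} = e^{-r_{n+1}}$ strictly decreases with positive limit $e^{-r_\infty}$, so the closed balls $B_n \coloneqq \{y \in \widehat{M} : \norm{y - x_n} \le e^{-r_{n+1}}\}$ form a pseudo-Cauchy, non-Cauchy nested configuration. Using the description of $\widehat{M}$ as the norm completion of $\bigoplus_{r \in R} L\cdot t^r$ (so every element has a unique expansion $y = \sum_{r \in R} g_r t^r$ with $g_r \in L$ and $\norm{g_r t^r} \to 0$), the ball conditions for $n \ge i$ force $\abs{g_{r_i} - 1} \le e^{r_i - r_\infty} < 1$, and then Lemma~\ref{lem:max-norm} gives $\abs{g_{r_i}} = 1$, so $\norm{g_{r_i} t^{r_i}} = e^{-r_i} \not\to 0$. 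This contradicts convergence of the expansion, so $\bigcap_n B_n = \emptyset$ and $\widehat{M}$ is not spherically complete. The main obstacle here is translating the ball constraints into these coefficient-wise constraints and checking that the contributions from cosets $r \notin \{r_i\}$ do not interfere with the argument.

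For $(\ref{thm:main-example-nop-1maps})$, set $\ell \coloneqq L^{1/p} = K^{1/p}((t^\Gamma))$, which is a Hahn-series field and therefore spherically complete. The inclusion $L \subseteq \widehat{M}$ gives the chain $\widehat{M} \subseteq \ell \subseteq \widehat{M}^{1/p}$, and $\widehat{M}$ is not spherically complete by $(\ref{thm:main-example-notsphercomp})$. The same Frobenius argument as in $(\ref{thm:main-example-algext})$ identifies $\widehat{M}^{1/p}$ with the norm completion of $\bigoplus_{r \in R} \ell \cdot t^{r/p}$ in the unique extension of the norm. Given a continuous $\widehat{M}$-linear $f\colon \widehat{M}^{1/p} \to \widehat{M}$, I define for each $r \in R$ an auxiliary map $g_r\colon \ell \to \widehat{M}$ by $g_r(a) \coloneqq f(a\cdot t^{r/p})$; each $g_r$ is continuous and $\widehat{M}$-linear, hence vanishes by Proposition~\ref{prop:trivial-dual-space} applied to the extension $\widehat{M} \hookrightarrow \ell$. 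Continuity of $f$ then propagates this vanishing from the dense subspace $\bigoplus_r \ell\cdot t^{r/p}$ to all of $\widehat{M}^{1/p}$, yielding $f \equiv 0$.
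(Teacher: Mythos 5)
Your argument is correct, and while parts $(\ref{thm:main-example-boundedcoset})$ and $(\ref{thm:main-example-algext})$ match the paper's proof, you take a genuinely different route in $(\ref{thm:main-example-notsphercomp})$ and $(\ref{thm:main-example-nop-1maps})$. For $(\ref{thm:main-example-notsphercomp})$, the paper never touches the definition of spherical completeness directly: it shows that $M \subseteq \widehat{M} \subseteq K((t^\Gamma))$ is a chain of \emph{immediate} extensions, exhibits the single Hahn series $\sum_i t^{-r_i}$ at distance $\geq e^{r}$ from all of $M$ to conclude $\widehat{M} \subsetneq K((t^\Gamma))$, and then invokes Kaplansky's theorem (Remark \ref{rem:spherical-comp-facts}$(\ref{maximal-extensions})$) that a spherically complete field has no proper immediate extension. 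You instead build an explicit nested sequence of closed balls with empty intersection, using the identification of $\widehat{M}$ with the $c_0$-style completed orthogonal sum $\widehat{\bigoplus_{r \in R} L\cdot t^r}$; this is more elementary (no Kaplansky) but puts the burden on justifying that identification and that the norm of an element of $\widehat{M}$ is the supremum of its coset-component norms --- both true, since $L = K((t^{p\Gamma}))$ is complete and distinct cosets have disjoint supports, but these are exactly the points you flag as "the main obstacle" and should be written out. For $(\ref{thm:main-example-nop-1maps})$, the paper sandwiches $\widehat{M} \subseteq K((t^\Gamma)) \subseteq \widehat{M}^{1/p}$ and transfers a hypothetical nonzero functional on $\widehat{M}^{1/p}$ down to $K((t^\Gamma))$ by precomposing with multiplication by a suitable element, then applies Proposition \ref{prop:trivial-dual-space} once; you instead use the spherically complete field $\ell = K^{1/p}((t^\Gamma))$, decompose $\widehat{M}^{1/p}$ as the completion of $\bigoplus_{r}\ell\cdot t^{r/p}$, and apply Proposition \ref{prop:trivial-dual-space} componentwise before concluding by density. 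Both are valid; the paper's multiplication trick avoids needing the decomposition of $\widehat{M}^{1/p}$ (and the identification $\widehat{M}^{1/p} = \widehat{M^{1/p}}$), whereas your version makes the structure of $\widehat{M}^{1/p}$ as an $\widehat{M}$-module more transparent.
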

\noindent We first explicitly describe an additive subgroup $\Gamma$ satisfying the
hypotheses above.

\begin{example}
  A simple example of a subgroup $\Gamma \subseteq R$ satisfying the
  hypothesis of Theorem \ref{thm:main-example} is
  \[
    \Gamma = \sum_{i \in \ZZ_{\geq 0}} \ZZ \cdot \{r_i\} \subseteq \RR
  \]
  generated by a decreasing sequence of positive real
  numbers $r_i$ such that $r_i \to 0$ as $i \to \infty$
  and such that $\{r_i\}_{i \in
  \ZZ_{\ge0}}$ is linearly independent over $\QQ$.
  Sequences of this form appear in \cite[\S5]{Kap42}, where Kaplansky constructs
  examples of non-Archimedean fields with non-unique spherical completions.
\end{example}

\par We now prove Theorem \ref{thm:main-example}.

\begin{proof}[Proof of Theorem \ref{thm:main-example}]
We first show $(\ref{thm:main-example-boundedcoset})$. Since $\Gamma/p\Gamma$ is infinite,
the group $\Gamma$ is not discrete (that is, isomorphic to $\ZZ$), and hence
$p\Gamma$ is also not discrete.
Consequently, both groups are dense in $\RR$. Choose any sequence 
$(r_i)_{i \in \ZZ_{> 0}}$ of representatives of pairwise distinct cosets of 
$p\Gamma$ in $\Gamma$.
By the density of $p\Gamma$ in $\RR$, for each $r_i$, there exists $f_i \in p\Gamma$
such that $\abs{r_i - f_i} < 1$. Then, replacing $(r_i)_i$ by $(r_i - f_i)_i$
gives a sequence of  representatives of pairwise distinct cosets 
$p\Gamma$ in $\Gamma$ such that
$\{r_i - f_i\}_i \subseteq (-1,1)$, proving $(\ref{thm:main-example-boundedcoset})$.

  \par We next show $(\ref{thm:main-example-algext})$.
  Note that $M$ can be
identified as a subfield of $K((t^\Gamma))$ by adjoining to $K((t^{p\Gamma}))$ 
the $p$-th roots $t^\gamma$ of elements of the form $t^{p\gamma} \in K((t^{p\Gamma}))$, 
for $\gamma \in \Gamma$. 
Moreover, since every element of $M$
lies in a subfield $K((t^{p\Gamma}))(t^{\gamma_1},t^{\gamma_2},\ldots,t^{\gamma_n})$,
for some finite set of elements $\gamma_1,\gamma_2,\ldots,\gamma_n \in \Gamma$,
it follows that $M$ consists of those Hahn series of $K((t^\Gamma))$
whose exponents lie in finitely many cosets of $p\Gamma$ in $\Gamma$.

\par We now show $(\ref{thm:main-example-notsphercomp})$.
We first claim that the extensions
\begin{equation}\label{eq:immext}
  M \subseteq \widehat{M} \subseteq K((t^\Gamma))
\end{equation}
are immediate extensions of real-valued fields.
For this, it suffices to show that $M \subseteq K((t^\Gamma))$ is immediate.
The field $M$ has the same value group 
as $K((t^\Gamma))$ (the norm of the $p$-th root of $t^{p\gamma}$
equals $e^{-\gamma}$, for every $\gamma \in \Gamma$), and both $M$ and
$K((t^\Gamma))$ have residue field $K$, showing that $M \subseteq K((t^\Gamma))$
is immediate.
Since spherically complete fields do not admit proper immediate extensions by
Remark \ref{rem:spherical-comp-facts}$(\ref{maximal-extensions})$, 
to show that $\widehat{M}$ is not spherically complete, it therefore suffices
to show that $\widehat{M}$ is a proper subfield of $K((t^\Gamma))$ in the
sequence \eqref{eq:immext} of immediate extensions.
By $(\ref{thm:main-example-boundedcoset})$, there is a bounded sequence $(r_i)_i$ 
of representatives of pairwise distinct cosets of
$p\Gamma$ in $\Gamma$.
By passing to a subsequence we may assume $(r_i)_i$ is strictly increasing or
strictly decreasing.
After possibly replacing every
$r_i$ by $-r_i$, we may further assume that the sequence $(r_i)$ is strictly
decreasing and bounded below by a real number $r \in \RR$.
The set $\{-r_i\}_{i \in \ZZ_{\ge0}}$ is then well-ordered and bounded above by
$-r$.
We claim the Hahn series
\[
  f = \sum_{i \in \ZZ_{\ge0}} t^{-r_i} \in K((t^\Gamma))
\]
does not lie in $\widehat{M}$.
For this, it suffices to show that elements in $M$ are bounded away from $\sum_i
t^{-r_i}$.
Let $g \in M$ be arbitrary.
Since the $r_i$ do not lie in $p\Gamma$, the series $g \in M$ can only
contain finitely many of the $-r_i$ as exponents by
$(\ref{thm:main-example-algext})$.
Let $i_g$ be such that $-r_{i_g}$ does not appear as an exponent
in $g$.
We therefore see that
\[
  \norm{f - g} = \norm[\Bigg]{t^{-r_{i_g}} + \sum_{\gamma \ne
  -r_{i_g}} b_\gamma t^\gamma} \ge e^{r_{i_g}} > e^{r}
\]
for some $b_\gamma \in K$, where the middle inequality follows by definition of
the Gauss norm $\norm{}$ on $K((t^\Gamma))$, and the last inequality follows
from the fact that the sequence $(r_i)$ is strictly decreasing and 
bounded below
by $r$.
Therefore, the ball of radius $e^{r}$ centered at $f$ contains no element of $M$.
Hence, $M$ is not dense in $K((t^\Gamma))$, and consequently   
$\widehat{M}$ is a proper subfield of $K((t^\Gamma))$ because $M$ is 
dense in $\widehat{M}$.
\par Finally, we show $(\ref{thm:main-example-nop-1maps})$.
  Since $\widehat{M}$ is not spherically complete by $(\ref{thm:main-example-notsphercomp})$,
it follows that there are no nonzero continuous $\widehat{M}$-linear maps
$K((t^\Gamma)) \rightarrow \widehat{M}$ by Proposition \ref{prop:trivial-dual-space}.
Since we have the inclusions
\[
  \widehat{M} \subseteq K((t^\Gamma)) \subseteq \widehat{M}^{1/p},
\]
if $f\colon \widehat{M}^{1/p} \rightarrow \widehat{M}$ 
is a continuous $\widehat{M}$-linear
map such that $f(x) \neq 0$, then for every nonzero $a \in K((t^\Gamma))$,
the composition
\[
K((t^\Gamma)) \hooklongrightarrow \widehat{M}^{1/p} \xrightarrow{-\cdot a^{-1}x} \widehat{M}^{1/p}
\overset{f}{\longrightarrow} \widehat{M}
\]
is a continuous $\widehat{M}$-linear map such that $a \mapsto f(x) \neq 0$. But
this contradicts the fact that $K((t^\Gamma))$ has no nonzero continuous 
$\widehat{M}$-linear
functionals.
\end{proof}

\begin{remark}\label{rem:BKfields}
  In \cite[Thm.\ 1.6]{BK15}, Blaszczok and Kuhlmann give a more general
  construction of fields similar to those in Theorem \ref{thm:main-example},
  although in our special setting the arguments are simpler.
  For suitable spherically complete non-Archimedean fields $(K,\abs{})$ of
  characteristic $p > 0$, Blaszczok and Kuhlmann construct a non-Archimedean
  field $L$ fitting into a sequence
  \[
    K \subseteq L \subseteq K^{1/p}
  \]
  of immediate extensions, such that the completion $\widehat{L}$ (denoted by
  $L^c$ in \cite[pp.\ 211--213]{BK15}) of $L$ is strictly
  contained in $K^{1/p}$.
  The argument in $(\ref{thm:main-example-notsphercomp})$ shows that
  $\widehat{L}$ is not spherically complete, and the argument in
  $(\ref{thm:main-example-nop-1maps})$ shows that there are no nonzero
  continuous $\widehat{L}$-linear functionals $\widehat{L}^{1/p} \to
  \widehat{L}$.
  We can therefore replace the field $\widehat{M}$ constructed in Theorem
  \ref{thm:main-example} with Blaszczok and Kuhlmann's field $\widehat{L}$ in
  the proofs of Theorem \ref{thm:mainthm} and Corollary
  \ref{cor:no-p-1-linear-maps} below.
\end{remark}

\par The proofs of Theorem \ref{thm:mainthm} and Corollary \ref{cor:no-p-1-linear-maps}
are now a simple matter of interpreting Theorem \ref{thm:refined-mainthm}
in light of the construction in Theorem \ref{thm:main-example}.

\begin{customthm}{A}
\label{thm:mainthm}
For every prime $p > 0$,
there exists a complete non-Archimedean field $(k,\abs{})$ of
characteristic $p$ such that the Tate algebra 
$T_n(k) \coloneqq k\{ X_1,X_2,\ldots,X_n \}$
is not Frobenius split for each $n > 0$.
In fact, $T_n(k)$ admits no nonzero $T_n(k)$-linear maps
$F_{T_n(k)*}T_n(k) \to T_n(k)$ for each $n > 0$.
\end{customthm} 

\begin{proof}
Take $k$ to be the non-Archimedean field $\widehat{M}$
constructed in Theorem \ref{thm:main-example}.
We can then use Theorem \ref{thm:refined-mainthm}
to conclude that the Tate algebras $T_n(k)$ cannot admit any nonzero $p^{-1}$-linear
maps for each $n > 0$.
\end{proof}

\begin{remark}
One obtains an analogue of Theorem \ref{thm:mainthm} for the
regular local convergent power series rings $K_n(k)$
by using Theorem \ref{prop:local-analogue}.
\end{remark}

\begin{customcor}{C}
\label{cor:no-p-1-linear-maps}
There exists an excellent Euclidean domain $R$ of characteristic
$p > 0$ such that $R$ admits no nonzero $R$-linear maps $F_{R*}R \rightarrow R$.
Moreover, one can choose $R$ to be local and Henselian as well.
\end{customcor}

\begin{proof}
Take $k$ to be
the field $\widehat{M}$ constructed in Theorem \ref{thm:main-example}
and $R$ to be the Tate algebra $T_1(k)$. Then, $R$ is a Euclidean domain by
Theorem \ref{thm:tateprops}$(\ref{thm:T_1euclid})$, and admits no nonzero $R$-linear maps
$F_{R*}R \rightarrow R$ by Theorem \ref{thm:mainthm}. To get a local and Henselian
Euclidean domain $R$, one can choose $R = K_1(k)$ and then apply 
Theorem \ref{prop:local-analogue}.
\end{proof}

\begin{remark}
\label{rem:as-close-to-complete}
A well-known argument using Matlis duality shows that
a Noetherian complete local ring that is $F$-pure is always Frobenius split \cite[Lem.\
1.2]{Fed83}. 
Thus, Corollary \ref{cor:no-p-1-linear-maps} provides a stark
contrast with the complete case, since it shows that 
Question \ref{ques:fpureisfsplit} fails 
even for excellent local rings that behave the most like
complete local rings, namely those that are Henselian.
\end{remark}

\begin{remark}
Let $k$ be the non-Archimedean field $\widehat{M}$ from Theorem \ref{thm:main-example}.
Since there are no nonzero continuous $k$-linear maps 
$F_{k*}k \rightarrow k$, the valuation ring $k^\circ$ has no nonzero
$k^\circ$-linear maps $F_{k^\circ*}k^\circ \rightarrow k^\circ$. Indeed, any nonzero
$k^\circ$-linear map $f\colon F_{k^\circ*}k^\circ \rightarrow k^\circ$ extends to nonzero
$k$-linear map $\widetilde{f}\colon F_{k*}k \rightarrow k$ at the level of fraction fields.
Since $\widetilde{f}(B_{1}(0)) \subseteq B_1(0)$ by virtue of $\widetilde{f}$ 
being an extension of $f$,
it follows that $\widetilde{f}$ is continuous by
Lemma \ref{lem:continuous}.
But this is impossible by
Theorem \ref{thm:main-example}.
This answers a question raised by the first author in \cite[p.\ 25]{Dat}
about the existence of non-Frobenius split complete rank $1$
valuation rings $k^\circ$ of a field $k$ for which the extension $k
\hookrightarrow k^{1/p}$ is not immediate (the group $\Gamma$ in this case is
not $p$-divisible).
\end{remark}


\begin{thebibliography}{vdPvT67}

  \bibitem[BB11]{BB11}
    M. Blickle and G. B\"ockle. ``Cartier modules: finiteness results.''
    \textit{J. Reine Angew. Math.} 661 (2011), pp. 85--123.
    \textsc{doi}: \burlalt{https://doi.org/10.1515/CRELLE.2011.087}{10.1515/CRELLE.2011.087}.
    \textsc{mr}: \burlalt{https://mathscinet.ams.org/mathscinet-getitem?mr=2863904}{2863904}.

  \bibitem[BGR84]{BGR84}
    S. Bosch, U. G\"{u}ntzer, and R. Remmert. \textit{Non-Archimedean analysis.
    A systematic approach to rigid analytic geometry.} Grundlehren Math. Wiss.,
    Vol. 261. Berlin: Springer-Verlag, 1984.
    \textsc{mr}: \burlalt{https://mathscinet.ams.org/mathscinet-getitem?mr=746961}{746961}.

  \bibitem[BK15]{BK15}
    A. Blaszczok and F.-V. Kuhlmann. ``Algebraic independence of elements in
    immediate extensions of valued fields.'' \textit{J. Algebra} 425 (2015), pp.
    179--214. 
    \textsc{doi}: \burlalt{https://doi.org/10.1016/j.jalgebra.2014.10.050}{10.1016/j.jalgebra.2014.10.050}.
    \textsc{mr}: \burlalt{https://mathscinet.ams.org/mathscinet-getitem?mr=3295983}{3295983}.

  \bibitem[Bli13]{Bli13}
    M. Blickle. ``Test ideals via algebras of $p^{-e}$-linear maps.'' \textit{J.
    Algebraic Geom.} 22.1 (2013), pp. 49--83.
    \textsc{doi}: \burlalt{https://doi.org/10.1090/S1056-3911-2012-00576-1}{10.1090/S1056-3911-2012-00576-1}.
    \textsc{mr}: \burlalt{https://mathscinet.ams.org/mathscinet-getitem?mr=2993047}{2993047}.

  \bibitem[Bos14]{Bos14}
    S. Bosch. \textit{Lectures on formal and rigid geometry.} Lecture Notes in
    Math., Vol. 2105. Cham: Springer, 2014.
    \textsc{doi}: \burlalt{https://doi.org/10.1007/978-3-319-04417-0}{10.1007/978-3-319-04417-0}.
    \textsc{mr}: \burlalt{https://mathscinet.ams.org/mathscinet-getitem?mr=3309387}{3309387}.

  \bibitem[BS13]{BS13}
    M. Blickle and K. Schwede. ``$p^{-1}$-linear maps in algebra and geometry.''
    \textit{Commutative algebra.} New York: Springer, 2013, pp. 123--205.
    \textsc{doi}: \burlalt{https://doi.org/10.1007/978-1-4614-5292-8_5}{10.1007/978-1-4614-5292-8_5}.
    \textsc{mr}: \burlalt{https://mathscinet.ams.org/mathscinet-getitem?mr=3051373}{3051373}.

  \bibitem[Car57]{Car57}
    P. Cartier. ``Une nouvelle op\'eration sur les formes diff\'erentielles.''
    \textit{C. R. Acad. Sci. Paris} 244 (1957), pp. 426--428.
    \textsc{url}: \url{https://gallica.bnf.fr/ark:/12148/bpt6k31965/f426.item}.
    \textsc{mr}: \burlalt{https://mathscinet.ams.org/mathscinet-getitem?mr=84497}{84497}.

  \bibitem[Con99]{Con99}
    B. Conrad. ``Irreducible components of rigid spaces.'' \textit{Ann. Inst.
    Fourier (Grenoble)} 49.2 (1999), pp. 473--541.
    \textsc{doi}: \burlalt{https://doi.org/10.5802/aif.1681}{10.5802/aif.1681}.
    \textsc{mr}: \burlalt{https://mathscinet.ams.org/mathscinet-getitem?mr=1697371}{1697371}.

  \bibitem[Dat]{Dat}
    R. Datta. ``Frobenius splitting of valuation rings and $F$-singularities of
    centers.'' Mar. 2, 2020. To appear in \emph{Algebra Number Theory}.
    \burlalt{https://arxiv.org/abs/1707.01649v3}{ arXiv:1707.01649v3}
    \burlalt{https://arxiv.org/abs/1707.01649v3}{[math.AC]}.

  \bibitem[DM]{DM}
    R. Datta and T. Murayama. ``Excellence, $F$-singularities, and solidity.''
    With an appendix by K. E. Smith. Jul. 20, 2020.
    \href{https://arxiv.org/abs/2007.10383v1}{\nolinkurl{ arXiv:2007.10383v1}}
    \href{https://arxiv.org/abs/2007.10383v1}{\nolinkurl{[math.AC]}}.
    
  \bibitem[DS18]{DS18}
    R. Datta and K. E. Smith. ``Excellence in prime characteristic.''
    \textit{Local and global methods in algebraic geometry.} Contemp. Math.,
    Vol. 712. Providence, RI: Amer. Math. Soc., 2018, pp. 105--116.
    \textsc{doi}: \burlalt{https://doi.org/10.1090/conm/712/14344}{10.1090/conm/712/14344}.
    \textsc{mr}: \burlalt{https://mathscinet.ams.org/mathscinet-getitem?mr=3832401}{3832401}.

  \bibitem[Efr06]{Efr06}
    I. Efrat. \textit{Valuations, orderings, and Milnor $K$-theory.} Math.
    Surveys Monogr., Vol. 124. Providence, RI: Amer. Math. Soc., 2006.
    \textsc{doi}: \burlalt{https://doi.org/10.1090/surv/124}{10.1090/surv/124}.
    \textsc{mr}: \burlalt{https://mathscinet.ams.org/mathscinet-getitem?mr=2215492}{2215492}.

  \bibitem[EGAIV\textsubscript{2}]{EGAIV2}
    A. Grothendieck and J. Dieudonn\'e. ``\'El\'ements de g\'eom\'etrie
    alg\'ebrique. IV. \'Etude locale des sch\'emas et des morphismes de
    sch\'emas. II.'' \textit{Inst. Hautes \'Etudes Sci. Publ. Math.} 24 (1965),
    pp. 1--231.
    \textsc{doi}: \burlalt{https://doi.org/10.1007/BF02684322}{10.1007/BF02684322}.
    \textsc{mr}: \burlalt{https://mathscinet.ams.org/mathscinet-getitem?mr=199181}{199181}.

  \bibitem[Fed83]{Fed83}
    R. Fedder. ``$F$-purity and rational singularity.'' \textit{Trans. Amer.
    Math. Soc.} 278.2 (1983), pp. 461--480.
    \textsc{doi}: \burlalt{https://doi.org/10.2307/1999165}{10.2307/1999165}.
    \textsc{mr}: \burlalt{https://mathscinet.ams.org/mathscinet-getitem?mr=701505}{701505}.

  \bibitem[Ger67]{Ger67}
    L. Gerritzen. ``Erweiterungsendliche Ringe in der nichtarchimedischen
    Funktionentheorie.'' \textit{Invent. Math.} 2 (1967), pp. 178--190.
    \textsc{doi}: \burlalt{https://doi.org/10.1007/BF01425512}{10.1007/BF01425512}.
    \textsc{mr}: \burlalt{https://mathscinet.ams.org/mathscinet-getitem?mr=206040}{206040}.

  \bibitem[GR71]{GR71}
    H. Grauert and R. Remmert. \textit{Analytische Stellenalgebren.} With the
    collaboration of O. Riemenschneider. Grundlehren Math. Wiss., Vol. 176.
    Berlin-New York: Springer-Verlag, 1971.
    \textsc{doi}: \burlalt{https://doi.org/10.1007/978-3-642-65033-8}{10.1007/978-3-642-65033-8}.
    \textsc{mr}: \burlalt{https://mathscinet.ams.org/mathscinet-getitem?mr=316742}{316742}.

  \bibitem[Har01]{Har01}
    N. Hara. ``Geometric interpretation of tight closure and test ideals.''
    \textit{Trans. Amer. Math. Soc.} 353.5 (2001), pp. 1885--1906.
    \textsc{doi}: \burlalt{https://doi.org/10.1090/S0002-9947-01-02695-2}{10.1090/S0002-9947-01-02695-2}.
    \textsc{mr}: \burlalt{https://mathscinet.ams.org/mathscinet-getitem?mr=1813597}{1813597}.

  \bibitem[HH90]{HH90}
    M. Hochster and C. Huneke. ``Tight closure, invariant theory, and the
    Brian\c{c}on-Skoda theorem.'' \textit{J. Amer. Math. Soc.} 3.1 (1990), pp.
    31--116.
    \textsc{doi}: \burlalt{https://doi.org/10.2307/1990984}{10.2307/1990984}.
    \textsc{mr}: \burlalt{https://mathscinet.ams.org/mathscinet-getitem?mr=1017784}{1017784}.

  \bibitem[HK15]{HK15}
    C. D. Hacon and S. J. Kov\'acs. ``Generic vanishing fails for singular
    varieties and in characteristic $p > 0$.'' \textit{Recent advances in
    algebraic geometry.} London Math. Soc. Lecture Note Ser., Vol. 417.
    Cambridge: Cambridge Univ. Press, 2015, pp. 240--253.
    \textsc{doi}: \burlalt{https://doi.org/10.1017/CBO9781107416000.014}{10.1017/CBO9781107416000.014}.
    \textsc{mr}: \burlalt{https://mathscinet.ams.org/mathscinet-getitem?mr=3380452}{3380452}.
    
  \bibitem[HR74]{HR74}
    M. Hochster and J. L. Roberts. ``Rings of invariants of reductive groups
    acting on regular rings are Cohen-Macaulay.'' \textit{Advances in Math.} 13
    (1974), pp. 115--175.
    \textsc{doi}: \burlalt{https://doi.org/10.1016/0001-8708(74)90067-X}{10.1016/0001-8708(74)90067-X}.
    \textsc{mr}: \burlalt{https://mathscinet.ams.org/mathscinet-getitem?mr=347810}{347810}.

  \bibitem[HR76]{HR76}
    M. Hochster and J. L. Roberts. ``The purity of the Frobenius and local
    cohomology.'' \textit{Advances in Math.} 21.2 (1976), pp. 117--172.
    \textsc{doi}: \burlalt{https://doi.org/10.1016/0001-8708(76)90073-6}{10.1016/0001-8708(76)90073-6}.
    \textsc{mr}: \burlalt{https://mathscinet.ams.org/mathscinet-getitem?mr=417172}{417172}.

  \bibitem[HT04]{HT04}
    N. Hara and S. Takagi. ``On a generalization of test ideals.''
    \textit{Nagoya Math. J.} 175 (2004), pp. 59--74.
    \textsc{doi}: \burlalt{https://doi.org/10.1017/S0027763000008904}{10.1017/S0027763000008904}.
    \textsc{mr}: \burlalt{https://mathscinet.ams.org/mathscinet-getitem?mr=2085311}{2085311}.

  \bibitem[HY03]{HY03}
    N. Hara and K. Yoshida. ``A generalization of tight closure and
    multiplier ideals.'' \textit{Trans. Amer. Math. Soc.} 355.8 (2003), pp.
    3143--3174.
    \textsc{doi}: \burlalt{https://doi.org/10.1090/S0002-9947-03-03285-9}{10.1090/S0002-9947-03-03285-9}.
    \textsc{mr}: \burlalt{https://mathscinet.ams.org/mathscinet-getitem?mr=1974679}{1974679}.

  \bibitem[Kap42]{Kap42}
    I. Kaplansky. ``Maximal fields with valuations.'' \textit{Duke Math. J.} 9
    (1942), pp. 303--321.
    \textsc{doi}: \burlalt{https://doi.org/10.1215/S0012-7094-42-00922-0}{10.1215/S0012-7094-42-00922-0}.
    \textsc{mr}: \burlalt{https://mathscinet.ams.org/mathscinet-getitem?mr=6161}{6161}.

  \bibitem[Kie69]{Kie69}
    R. Kiehl. ``Ausgezeichnete Ringe in der nichtarchimedischen analytischen
    Geometrie.'' \textit{J. Reine Angew. Math.} 234 (1969), pp. 89--98.
    \textsc{doi}: \burlalt{https://doi.org/10.1515/crll.1969.234.89}{10.1515/crll.1969.234.89}.
    \textsc{mr}: \burlalt{https://mathscinet.ams.org/mathscinet-getitem?mr=243126}{243126}.

  \bibitem[Kru32]{Kru32}
    W. Krull. ``Allgemeine Bewertungstheorie.'' \textit{J. Reine Angew. Math.}
    167 (1932), pp. 160--196.
    \textsc{doi}: \burlalt{https://doi.org/10.1515/crll.1932.167.160}{10.1515/crll.1932.167.160}.
    \textsc{mr}: \burlalt{https://mathscinet.ams.org/mathscinet-getitem?mr=1581334}{1581334}.

  \bibitem[Kun69]{Kun69}
    E. Kunz. ``Characterizations of regular local rings of characteristic $p$.''
    \textit{Amer. J. Math.} 91 (1969), pp. 772--784.
    \textsc{doi}: \burlalt{https://doi.org/10.2307/2373351}{10.2307/2373351}.
    \textsc{mr}: \burlalt{https://mathscinet.ams.org/mathscinet-getitem?mr=252389}{252389}.

  \bibitem[Kun76]{Kun76}
    E. Kunz. ``On Noetherian rings of characteristic $p$.'' \textit{Amer. J.
    Math.} 98.4 (1976), pp. 999--1013.
    \textsc{doi}: \burlalt{https://doi.org/10.2307/2374038}{10.2307/2374038}.
    \textsc{mr}: \burlalt{https://mathscinet.ams.org/mathscinet-getitem?mr=432625}{432625}.

  \bibitem[LS01]{LS01}
    G. Lyubeznik and K. E. Smith. ``On the commutation of the test ideal with
    localization and completion.'' \textit{Trans. Amer. Math. Soc.} 353.8
    (2001), pp. 3149--3180.
    \textsc{doi}: \burlalt{https://doi.org/10.1090/S0002-9947-01-02643-5}{10.1090/S0002-9947-01-02643-5}.
    \textsc{mr}: \burlalt{https://mathscinet.ams.org/mathscinet-getitem?mr=1828602}{1828602}.
  
  \bibitem[Mat89]{Mat89}
    H. Matsumura. \textit{Commutative ring theory.} Second ed. Translated from
    the Japanese by M. Reid. Cambridge Stud. Adv. Math., Vol. 8. Cambridge:
    Cambridge Univ. Press, 1989.
    \textsc{doi}: \burlalt{https://doi.org/10.1017/CBO9781139171762}{10.1017/CBO9781139171762}.
    \textsc{mr}: \burlalt{https://mathscinet.ams.org/mathscinet-getitem?mr=1011461}{1011461}.

  \bibitem[MR85]{MR85}
    V. B. Mehta and A. Ramanathan. ``Frobenius splitting and cohomology
    vanishing for Schubert varieties.'' \textit{Ann. of Math. (2)} 122.1 (1985),
    pp. 27--40.
    \textsc{doi}: \burlalt{https://doi.org/10.2307/1971368}{10.2307/1971368}.
    \textsc{mr}: \burlalt{https://mathscinet.ams.org/mathscinet-getitem?mr=799251}{799251}.

  \bibitem[Nag75]{Nag62}
    M. Nagata. \textit{Local rings.} Corrected reprint. Huntington, NY: Robert
    E. Krieger Publ. Co., 1975.
    \textsc{mr}: \burlalt{https://mathscinet.ams.org/mathscinet-getitem?mr=460307}{460307}.

  \bibitem[PGS10]{PGS10}
    C. Perez-Garcia and W. H. Schikhof. \textit{Locally convex spaces over
    non-Archimedean valued fields.} Cambridge Stud. Adv. Math., Vol. 119.
    Cambridge: Cambridge Univ. Press, 2010.
    \textsc{doi}: \burlalt{https://doi.org/10.1017/CBO9780511729959}{10.1017/CBO9780511729959}.
    \textsc{mr}: \burlalt{https://mathscinet.ams.org/mathscinet-getitem?mr=2598517}{2598517}.

  \bibitem[Poo93]{Poo93}
    B. Poonen. ``Maximally complete fields.'' \textit{Enseign. Math. (2)} 39.1-2
    (1993), pp. 87--106.
    \textsc{doi}: \burlalt{https://doi.org/10.5169/seals-60414}{10.5169/seals-60414}.
    \textsc{mr}: \burlalt{https://mathscinet.ams.org/mathscinet-getitem?mr=1225257}{1225257}.

  \bibitem[PST17]{PST17}
    Zs. Patakfalvi, K. Schwede, and K. Tucker. ``Positive characteristic
    algebraic geometry.'' \textit{Surveys on recent developments in algebraic
    geometry.} Proc. Sympos. Pure Math., Vol. 95. Providence, RI: Amer. Math.
    Soc., 2017, pp. 33--80.
    \textsc{doi}: \burlalt{https://doi.org/10.1090/pspum/095}{10.1090/pspum/095}.
    \textsc{mr}: \burlalt{https://mathscinet.ams.org/mathscinet-getitem?mr=3727496}{3727496}.

  \bibitem[Ray78]{Ray78}
    M. Raynaud. ``Contre-exemple au ``vanishing theorem'' en caract\'eristique
    $p > 0$.'' \textit{C. P. Ramanujam---a tribute.} Tata Inst. Fund. Res.
    Studies in Math, Vol. 8. Berlin-New York: Springer-Verlag, 1978, pp. 
    273--278.
    \textsc{url}: \url{http://www.math.tifr.res.in/~publ/studies/CP-Ramanujam-A-Tribute.pdf}.
    \textsc{mr}: \burlalt{https://mathscinet.ams.org/mathscinet-getitem?mr=541027}{541027}.

  \bibitem[Sch06]{Sch06}
    W. H. Schikhof. \textit{Ultrametric calculus. An introduction to $p$-adic
    analysis.} Reprint of the 1984 original. Cambridge Stud. Adv. Math., Vol. 4.
    Cambridge: Cambridge Univ. Press, 2006.
    \textsc{doi}: \burlalt{https://doi.org/10.1017/CBO9780511623844}{10.1017/CBO9780511623844}.
    \textsc{mr}: \burlalt{https://mathscinet.ams.org/mathscinet-getitem?mr=2444734}{2444734}.

  \bibitem[Sch10]{Sch10}
    K. Schwede. ``Centers of $F$-purity.'' \textit{Math. Z.} 265.3 (2010), pp.
    687--714.
    \textsc{doi}: \burlalt{https://doi.org/10.1007/s00209-009-0536-5}{10.1007/s00209-009-0536-5}.
    \textsc{mr}: \burlalt{https://mathscinet.ams.org/mathscinet-getitem?mr=2644316}{2644316}.

  \bibitem[Smi00]{Smi00}
    K. E. Smith. ``The multiplier ideal is a universal test ideal.''
    \textit{Comm. Algebra} 28.12 (2000): \textit{Special issue in honor of Robin
    Hartshorne,} pp. 5915--5929.
    \textsc{doi}: \burlalt{https://doi.org/10.1080/00927870008827196}{10.1080/00927870008827196}.
    \textsc{mr}: \burlalt{https://mathscinet.ams.org/mathscinet-getitem?mr=1808611}{1808611}.

  \bibitem[ST12]{ST12}
    K. Schwede and K. Tucker. ``A survey of test ideals.'' \textit{Progress in
    commutative algebra 2.} Berlin: Walter de Gruyter, 2012, pp. 39--99.
    \textsc{doi}: \burlalt{https://doi.org/10.1515/9783110278606.39}{10.1515/9783110278606.39}.
    \textsc{mr}: \burlalt{https://mathscinet.ams.org/mathscinet-getitem?mr=2932591}{2932591}.

	\bibitem[Stacks]{stacks-project}
    The Stacks project authors. \textit{The Stacks project.} 2021. \url{https://stacks.math.columbia.edu}.
    
  \bibitem[SZ15]{SZ15}
    K. E. Smith and W. Zhang. ``Frobenius splitting in commutative algebra.''
    \textit{Commutative algebra and noncommutative algebraic geometry. Vol. I.}
    Math. Sci. Res. Inst. Publ., Vol. 67. New York: Cambridge Univ. Press, 2015,
    pp. 291--345.
    \textsc{doi}: \url{http://library.msri.org/books/Book67/files/150123-Smith.pdf}.
    \textsc{mr}: \burlalt{https://mathscinet.ams.org/mathscinet-getitem?mr=3525475}{3525475}.

  \bibitem[Tak04]{Tak04}
    S. Takagi. ``An interpretation of multiplier ideals via tight closure.''
    \textit{J. Algebraic Geom.} 13.2 (2004), pp. 393--415. 
    \textsc{doi}: \burlalt{https://doi.org/10.1090/S1056-3911-03-00366-7}{10.1090/S1056-3911-03-00366-7}.
    \textsc{mr}: \burlalt{https://mathscinet.ams.org/mathscinet-getitem?mr=2275023}{2275023}.

  \bibitem[Tat71]{Tat71}
    J. Tate. ``Rigid analytic spaces.'' \textit{Invent. Math.} 12 (1971), pp.
    257--289.
    \textsc{doi}: \burlalt{https://doi.org/10.1007/BF01403307}{10.1007/BF01403307}.
    \textsc{mr}: \burlalt{https://mathscinet.ams.org/mathscinet-getitem?mr=306196}{306196}.

  \bibitem[TW18]{TW18}
    S. Takagi and K.-i. Watanabe. ``$F$-singularities: applications of
    characteristic $p$ methods to singularity theory.'' Translated from the
    Japanese by the authors. \textit{Sugaku Expositions} 31.1 (2018), pp. 1--42.
    \textsc{doi}: \burlalt{https://doi.org/10.1090/suga/427}{10.1090/suga/427}.
    \textsc{mr}: \burlalt{https://mathscinet.ams.org/mathscinet-getitem?mr=3784697}{3784697}.

  \bibitem[vdPvT67]{vdPvT67}
    M. van der Put and J. van Tiel. ``Espaces nucl\'eaires non archim\'ediens.''
    \textit{Nederl. Akad. Wetensch. Proc. Ser. A} 70 = \textit{Indag. Math.} 29
    (1967), pp. 556--561.
    \textsc{doi}: \burlalt{https://doi.org/10.1016/S1385-7258(67)50072-0}{10.1016/S1385-7258(67)50072-0}.
    \textsc{mr}: \burlalt{https://mathscinet.ams.org/mathscinet-getitem?mr=228971}{228971}.

  \bibitem[vR78]{vR78}
    A. C. M. van Rooij. \textit{Non-Archimedean functional analysis.} Monogr.
    Textbooks Pure Appl. Math., Vol. 51. New York: Dekker, 1978.
    \textsc{mr}: \burlalt{https://mathscinet.ams.org/mathscinet-getitem?mr=512894}{512894}.

\end{thebibliography}
\end{document}